\documentclass[preprint]{elsarticle}

\usepackage{amssymb}
\usepackage{amsthm}

%%------------------------------------------------------

\usepackage{amsmath,amsfonts}
\usepackage{graphicx}%

\theoremstyle{plain}
\newtheorem{Thm}{Theorem}
\newtheorem{Pro}{Proposition}
\newtheorem{Lem}{Lemma}
\newtheorem{Cor}{Corollary}
\theoremstyle{definition}
\newtheorem{Def}{Definition}
\theoremstyle{remark}
\newtheorem{Rem}{Remark}
\newtheorem{Exa}{Example}
\usepackage{hyperref}
\usepackage{color}
\usepackage[small]{caption}
\usepackage{pstricks, pst-node, pst-plot, pst-circ}
\usepackage{moredefs}

\newcommand{\LtR}{ L^2 (\mathbb{R})}
 
 \def\RR{\mathbb{R}}
 
 \def\CC{\mathbb{C}}
 \def\ZZ{\mathbb{Z}}
 \def\supp{\operatorname{supp}}
 \def\sgn{\operatorname{sgn}}
 \def\bd{\mathbf}
 
 \def\NN{\mathbb{N}}

\begin{document}

\begin{frontmatter}

\title{Structure of nonstationary Gabor frames and their dual systems}
\author{Nicki Holighaus\fnref{fn1}}
\ead{nicki.holighaus@kfs.oeaw.ac.at}
\fntext[fn1]{Tel. +43 1 51581-2516, Fax. +43 1 51581 2530}
\address{Acoustics Research Institute\\
		Austrian Academy of Sciences\\
		Wohllebengasse 12--14, A-1040 Vienna, Austria}
\date{\today}

  \begin{abstract}
    We investigate the structural properties of dual systems for nonstationary Gabor frames. In particular, we prove that some inverse nonstationary Gabor frame operators admit a Walnut-like representation, i.e. the operator acting on a function can be described by weighted translates of that function, even when the original frame operator is not diagonal. In this case, which only occurs when compactly supported window functions are used, 
    the canonical dual frame partially inherits the structure of the original frame, with differences that we describe in detail. Moreover, we determine a sufficient condition for a pair of nonstationary Gabor frames to form dual frames. The equivalence of this condition to the duality of the involved systems is shown under some weak restrictions. It is then applied in a simple setup, to prove the existence of dual pairs of nonstationary Gabor systems with non-diagonal frame operator.    
    A discussion of the results, restricted to the classical case of regular Gabor systems, precedes the statement of the general results. Here, we also explore a connection to recent work of Christensen, Kim and Kim on Gabor frames with compactly supported window function.
  \end{abstract}

\begin{keyword}
time-frequency, adaptivity, Gabor analysis, frames, duality condition
\end{keyword}

\end{frontmatter}
  
\section{Introduction}\label{sec:intro}
    In this contribution, we investigate the properties of adaptive time-frequency systems that generalize classical Gabor systems. Although some of the presented results apply in a more general setting, the focus is  on time-frequency systems with compactly supported generators.
    
    Let $g\in\LtR$ and $a,b\in\RR^+$. The corresponding \emph{Gabor system}~\cite{gr01,ga46} $\mathcal{G}(g,a,b)$ is the set of functions
    \begin{equation}\label{eq:gabsys}
      g_{m,n}(t) = \bd{M}_{mb}\bd{T}_{na} g(t) = g(t-na)e^{2\pi i mbt},~\forall~m,n\in\ZZ.
    \end{equation}
    The prototype function $g$ is also called \emph{window} or \emph{generator function}.
    Of particular interest are systems that allow for stable, perfect reconstruction of any function $f\in\LtR$ from the system coefficients, 
    given by inner products with the system elements. Such systems are generally called \emph{frames}~\cite{dusc52,ch03} or, when they are of the form $\mathcal{G}(g,a,b)$, \emph{Gabor frames}.
    For any frame, there exists a possibly non-unique \emph{dual frame} that enables the aforementioned perfect reconstruction. Gabor frames $\mathcal{G}(g,a,b)$ possess the nice property that, due to their highly structured nature, the existence of a dual frame with the same structure, $\mathcal{G}(h,a,b)$ for some $h\in\LtR$, is guaranteed. This inheritance of structure from the original frame by a dual frame does not hold for more general frames and is one of the reasons why Gabor frames are so convenient to work with. One such dual frame is the \emph{canonical dual}, obtained by applying the inverse \emph{frame operator}, cf. Section \ref{sec:prelims} for details, to the frame elements.
    
    One of the early and most prevalent results in the field is the theory of painless non-orthogonal expansions~\cite{dagrme86}, where the authors determine a simple necessary and sufficient condition for Gabor frames $\mathcal{G}(g,a,b)$ with compactly supported generator and dense frequency sampling, i.e. small frequency step $b$, to constitute a frame. Then, the frame operator is diagonal and thus easily inverted and the canonical dual generator $\tilde{g}$ has the same support as $g$. This setting is often referred to as the \emph{painless case}.
    
    In applications, frames generated from compactly supported window functions are of particular interest, because they allow for the most efficient computation of the frame coefficients and reconstruction. Compact support of the frame generators is also crucial for real-time implementation. Thus, the investigation of such frames beyond the painless case is an active field, see e.g. \cite{boja00,ch06,chsu08,chki10,la09-2} and \cite{chkiki10,chkiki12}. In the latter two articles, Christensen, Kim and Kim prove that for any Gabor frame with $\supp(g)\subseteq [1,1]$, $a = 1$ and $b\in ]1/2,1[$, there exists a dual Gabor frame generated by a window supported on some compact set dependent only on the magnitude of $b$, cf. \cite[Theorem 2.1, Lemma 3.2]{chkiki10}. In fact, they show in \cite{chkiki12} that the support condition in \cite{chkiki10} can be further improved, for a large class of window functions $g$. This is also reflected in our own results in Section \ref{sec:regcas} albeit 
    recovering only a special case of the results in \cite{chkiki12}. The results in this manuscript are somewhat complementary to those of Christensen, Kim and Kim. To allow for a comparison, we recall some results from \cite{chkiki10} in Section \ref{sec:regcas}. 

    More results on the support of dual Gabor frames are due to Gr\"ochenig and St\"ockler~\cite[Theorem 9]{grst11}. They prove the existence of dual frames with compactly
    supported, piecewise continuous generator for $\mathcal{G}(g,a,b)$ with $g$ a totally positive function of finite type. While the class of functions treated by Gr\"ochenig and St\"ockler 
    is quite different from the compactly supported functions in this contribution, the support size of the dual generator grows proportionally to the quotient $\frac{ab}{1-ab}$ in 
    both cases.\\

    Although Gabor systems possess a number of useful mathematical properties, that lead to a deep, yet accessible theory, the fixed time-frequency resolution and sampling strategy they provide is often debated as too restrictive for practical purposes. As a result, various generalizations have been proposed, to provide more flexible sampling strategies or varying window functions. Methods that allow for prefect reconstruction with flexible sampling and varying windows are scarce, however. One construction that unites these desirable properties are \emph{nonstationary Gabor} (NSG) systems, first proposed by Jaillet~\cite{ja05-2}. 
    While classical Gabor systems are constructed from regular translations and modulations, NSG systems are generated by a countable set of window functions and modulations thereof.
    Explicitly, we associate a sequence of pairs $\mathcal{G}(\bd{g},\bd{b}):=(g_n,b_n)_{n\in\ZZ}$, $g_n\in\LtR$ and $b_n\in\RR^+$, with the set of functions
    \begin{equation}\label{eq:nsgab}
      g_{m,n}(t) = \bd{M}_{mb_n}g_n(t) = g_n(t)e^{2\pi imb_n t},\quad \text{for all } m,n\in\ZZ.
    \end{equation}
    If $\mathcal{G}(\bd{g},\bd{b})$ constitutes a frame, we call it a \emph{nonstationary Gabor frame}. 
    Note that a nonstationary Gabor system with $b_n = b$ and $g_n = T_{na}g$ for all $n\in\ZZ$ with $g\in\LtR$ and $a,b\in\RR^+$ is a Gabor system.

    Nonstationary Gabor frames combine the adaptivity of \emph{local Fourier bases}~\cite{ma92-2,au94} with the flexibility of redundant systems to provide a powerful framework for time-frequency representations.
    Much like Gabor frames give rise to \emph{Wilson bases}~\cite{dajajo91,fegrwa92,behewa98,bi03}, local Fourier bases can be constructed from NSG frames, although the more intricate properties of their relationship have yet to be investigated. 

    State of the art results on \emph{nonstationary Gabor frames} are collected in \cite{badohojave11}, where an extension of the painless case
    to nonstationary Gabor systems is also given. 
    
    The young theory of nonstationary Gabor frames beyond the painless case is currently being developed~\cite{doma11,doma12-2}, while the 
    painless construction is being used in realizing various time- or frequency-adaptive transforms~\cite{prruwo10,lirororo11,liromaroro13,badohojave11,nebahoso13,dogrhove12}.\\
    
    Note that, in contrast to regular Gabor frames, the existence of a dual frame with the same structure, i.e. comprised of window functions $h_n$ and modulation 
    parameters $b_n$, is not guaranteed for general NSG frames. Indeed, one of the central results in this manuscript details the structure of the canonical dual system under certain
    restrictions. These restrictions, concerning the support and overlap of the window functions $g_n$ and the modulation parameters $b_n$, guarantee compact support for the elements
    of the canonical dual frame and a certain modulation and phase shift structure, detailed in Section \ref{sec:nsgrap}. This structure can be deduced from that of the inverse frame operator,
    which is in turn determined using the Walnut representation of the NSG frame operator and the Neumann series representation of its inverse. 
    
    Further, we obtain a duality condition, sufficient for pairs of dual nonstationary Gabor systems $\mathcal{G}(\bd{g},\bd{b})$ and $\mathcal{G}(\bd{h},\bd{b})$ to constitute dual frames. Under weak
    assumptions on $g_n$ and $b_n$, we are able to show equivalence of this condition with duality of $\mathcal{G}(\bd{g},\bd{b})$ and $\mathcal{G}(\bd{h},\bd{b})$. For a fixed NSG frame $\mathcal{G}(\bd{g},\bd{b})$, these equations might not be solvable, i.e. a dual system of the form $\mathcal{G}(\bd{h},\bd{b})$ may not even exist. We determine a simple, yet somewhat restrictive, condition on $\mathcal{G}(\bd{g},\bd{b})$, such that the duality conditions are solvable. 
    
    Our results apply to the classical Gabor case by choosing $b_n = b$ and $g_n = T_{na}g$ to describe the support of the canonical dual window $\tilde{g}$ in the setting considered in \cite{chkiki10} and \cite{chkiki12}, complementing the results therein. By restricting the duality conditions for NSG systems in that way, we recover the famous duality conditions for Gabor systems~\cite{rosh95,rosh97,ja98} and
    a simple special case of a result in \cite{chkiki12}.
    
    The rest of the paper is organized as follows. Section \ref{sec:prelims} introduces the basic concepts and notation used, while Section \ref{sec:walnutlike} introduces the Walnut representation of the nonstationary Gabor frame operator and related concepts. In Section \ref{sec:regcas}, we state our results for the regular Gabor case and compare them to the results of Christensen, Kim and Kim~\cite{chkiki10,chkiki12}. Section \ref{sec:nsgrap} presents our results in their general form for nonstationary Gabor frames. The proof of Theorem \ref{thm:mainres} is postponed to Section \ref{sec:prodis} due to its lengthy nature. Section \ref{sec:conclu} concludes the paper with a summary of the results and an outlook.

%%% ---------------------------------------------------------------------------------------------------------------
%%% -----------------------------------------Preliminaries---------------------------------------------------------    
%%% ---------------------------------------------------------------------------------------------------------------    
    \section{Preliminaries}\label{sec:prelims}
    
    Before we state our results, some basic notions have to be clarified. In particular, we work with certain structured time-frequency dictionaries and the corresponding frame-related operators. By $\|\cdot \|$ we denote the $L^2$-norm and by $\|\cdot\|_{op}$ the operator norm. Furthermore we use the restriction $f\mid_I$ of $f\in\LtR$ to the interval $I$, the characteristic function $\chi_M$ of the set $M\subseteq\RR$ and the open $L^2(\RR)$-ball $B_{\delta}(t)$ around $t$ with radius $\delta$. The essential support, i.e. the support up to sets of measure zero, of $f\in\LtR$ is denoted by $\supp(f)$, the Lebesgue measure of a set $M\subseteq\RR$ by $\mu(M)$. Finally, we use the sign function $\sgn(t)$.
    
    Let $\Phi := (\phi_\lambda\in\LtR )_{\lambda\in\Lambda}$, with a countable index set $\Lambda$, be a system of generator functions. If there exist $0 < A\leq B < \infty$, such that
    \begin{equation}\label{eq:frameeq}
      A\|f\|^2 \leq \sum\limits_{\lambda\in\Lambda} |\langle f,\phi_\lambda\rangle|^2 \leq B\|f\|^2,\quad \forall~ f\in\LtR,
    \end{equation}
    then we call $\Phi$ a \emph{frame for $\LtR$}. If $\Phi$ satisfies the upper bound, but not necessarily the lower bound, $\Phi$ is a \emph{Bessel sequence for $\LtR$}. Since all 
    frames and Bessel sequences in this contribution are considered over $\LtR$, we will usually omit the reference to the function space from now on.

    The basic operators associated to frames are the analysis and synthesis operators $\bd{C}_{\Phi}:\LtR \rightarrow \ell^2(\Lambda)$ and $\bd{D}_{\Phi}:\ell^2(\Lambda) \rightarrow \LtR$ 
    defined by
    \begin{equation}\label{eq:ana_syn}
      \bd{C}_{\Phi}f(\lambda) = \langle f,\phi_\lambda\rangle \quad \text{and} \quad \bd{D}_{\Phi}c = \sum_{\lambda\in\Lambda} c(\lambda)\phi_\lambda,
    \end{equation}
    for all $f\in\LtR, c\in \ell^2(\Lambda)$. For two systems $\bd{\Phi}$ and $\bd{\Psi}$ with the same index set $\Lambda$, the mutual \emph{frame-type operator} $\bd{S}_{\Phi,\Psi} := \bd{D}_{\Psi}\bd{C}_{\Phi}$, 
    is given by
    \begin{equation}\label{eq:frametypeop}
      \bd{S}_{\Phi,\Psi}f(\lambda) = \sum_{\lambda\in\Lambda}\langle f,\phi_\lambda\rangle\psi_\lambda,\ \forall~f\in\LtR.
    \end{equation}
    If $\Phi=\Psi$, then we call $\bd{S}_{\Phi,\Phi} = \bd{S}_{\Phi}$ the \emph{frame operator} associated to $\Phi$.
    If $\Phi$ is a Bessel sequence with Bessel bound $B$, then $\bd{C}_{\Phi}$ and $\bd{D}_{\Phi}$ are bounded operators, i.e. $\|\bd{C}_{\Phi}\|^2_{op} = \| \bd{D}_{\Phi}\|^2_{op} \leq B$. If $\Phi$ also satisfies the lower frame inequality, then $\bd{S}_{\Phi}$ is bounded above and below: $A \leq \|\bd{S}_{\Phi}\|_{op} \leq B$. Analysis, synthesis and frame operators are well-defined for systems $\Phi$ violating the upper and/or lower frame condition, but no longer bounded above or below, respectively. Whenever attribution of the operator to a frame is clear, we will omit the subscript.

    For any frame $\Phi := \{\phi_\lambda\in\LtR \}_{\lambda\in\Lambda}$ with frame bounds $A,B$, there exists a, possibly non-unique, dual frame $\Psi := \{\psi_\lambda\in\LtR \}_{\lambda\in\Lambda}$ such that 
    \begin{equation}
      f = \bd{D}_{\Psi}\bd{C}_{\Phi}f = \bd{D}_{\Phi}\bd{C}_{\Psi}f,\quad \forall~f\in\LtR.
    \end{equation}

    In particular the \emph{canonical dual frame} is given by applying the inverse frame operator $\bd{S_{\Phi}}^{-1}$ to the frame elements:
    \begin{equation}
      \widetilde{\Phi} := (\tilde{\phi}_\lambda = \bd{S}_{\Phi}^{-1}\phi_\lambda)_{\lambda\in\Lambda}.
    \end{equation}
    
    For any frame, the inverse frame operator admits a Neumann series representation~\cite{dusc52}. Let $0<A\leq B<\infty$ be the optimal frame bounds, then
    \begin{equation}\label{eq:neumann}
      \bd{S}^{-1} = 2/(A+B) \sum_{j=0}^\infty (\bd{I}-2\bd{S}/(A+B))^j,
    \end{equation}
    where $\bd{I}$ denotes the identity operator. The normalization factor $2/(A+B)$ yields $\|\bd{I}-2\bd{S}/(A+B)\|_{op} \leq \frac{B-A}{B+A} < 1$ and the fastest convergence among all possible choices~\cite{gr93,li98-1,jaso07}.

    In this contribution, we are interested in Gabor and nonstationary Gabor systems, i.e. systems of the form $\mathcal{G}(g,a,b)$~\eqref{eq:gabsys} or $\mathcal{G}(\bd{g},\bd{b})$~\eqref{eq:nsgab},   
    that constitute frames or Bessel sequences. We associate $\bd{g}$ and $\bd{b}$ with the sequences $(g_n)_n$ and $(b_n)_n$, respectively.
    
    For a (nonstationary) Gabor system generated from compactly supported window functions with dense frequency sampling, a conveniently simple condition exists that is equivalent to the frame property~\cite{dagrme86,badohojave11}. More
    explicitly, let $\mathcal{G}(\bd{g},\bd{b})$ be a nonstationary Gabor system as per \eqref{eq:nsgab}, such that for all $n\in\ZZ$, some constants $c_n,d_n\in\RR$ exist with $\supp(g_n) \subseteq [c_n,d_n]$ and
    $b_n^{-1} \geq d_n-c_n$. Then $\mathcal{G}(\bd{g},\bd{b})$ forms a frame, with frame bounds $A$ and $B$, if and only if
    \begin{equation}\label{eq:nspainless}
      0 < A \leq \sum_n \frac{1}{b_n} |g_n|^2 \leq B < \infty, \text{ almost everywhere. }
    \end{equation}
    This setup is usually referred to as the \emph{painless case} and $\mathcal{G}(\bd{g},\bd{b})$ is called a \emph{painless system}, since the frame operator is diagonal and easily invertible. Moreover, the canonical dual frame is of the form $\mathcal{G}(\tilde{\bd{g}},\bd{b})$, with $\supp(\widetilde{g_n})\subseteq [c_n,d_n]$.  Independent of $\supp(g_n)$, the former is also true whenever $b_n = b$ for all $n\in\ZZ$, in particular for regular Gabor frames.
    
%%% ---------------------------------------------------------------------------------------------------------------
%%% -----------------------------------------Walnut reps-----------------------------------------------------------    
%%% ---------------------------------------------------------------------------------------------------------------
    
  \section{Walnut and Walnut-like representations}\label{sec:walnutlike}
  
    Both regular and and nonstationary Gabor frame-type operators admit a so-called \emph{Walnut representation}, i.e. a representation purely in terms of translates of the frame generators and the function to which the operator is applied. The Walnut representation for nonstationary Gabor frames has only recently been rigorously proven %(->12) 
    for systems constructed from window functions in the Wiener space~\cite{doma11}. Here, we also use a variant for Bessel sequences. For the proof, we refer the interested reader 
    to \cite{doma11}, since the Bessel case only requires minor modifications.
    
    \begin{Def}
      The Wiener space $W(L^\infty,\ell^1)$ is the space of functions $f\in L^\infty(\RR)$ such that 
      \begin{equation}
	\|f\|_{W(L^\infty,\ell^1)} := \sum_{k\in\ZZ} \mathop{\operatorname{ess~sup}}\limits_{t\in [0,1]} |f(t+k)| < \infty.
      \end{equation}
    \end{Def}
    
    \begin{Pro}[\cite{doma11}]\label{pro:walnut}%(->12)]
      Let $\mathcal{G}(\bd{g},\bd{b})$ and $\mathcal{G}(\bd{h},\bd{b})$ be nonstationary Gabor systems with $b_n\in\RR^+$ and $g_n\in\LtR$, for all $n\in\ZZ$. If either
      \begin{enumerate}
      \item[(i)] $g_n,~h_n\in W(L^\infty,\ell^1)$ for all $n\in\ZZ$, or
      \item[(ii)] $\mathcal{G}(\bd{g},\bd{b})$ and $\mathcal{G}(\bd{h},\bd{b})$ are Bessel sequences, 
      \end{enumerate}
      then the associated frame-type operator $\bd{S}_{\bd{g},\bd{h},\bd{b}} := \bd{D}_{\mathcal{G}(\bd{h},\bd{b})}\bd{C}_{\mathcal{G}(\bd{g},\bd{b})}$ admits a Walnut representation of the form
      \begin{equation}\label{eq:walnut}
	\bd{S}_{\bd{g},\bd{h},\bd{b}}f = \sum_{n,k\in\ZZ} b_n^{-1}h_n\bd{T}_{kb_n^{-1}}\overline{g_n}\bd{T}_{kb_n^{-1}}f, \quad \text{for all } f\in\LtR.
      \end{equation}
      Substituting $b_n$ by $b$ and $g_n$ by $\bd{T}_{na} g$ for all $n\in\ZZ$ yields the Walnut representation of Gabor frame-type operators. Setting $\bd{h}=\bd{g}$ yields the Walnut representation for the frame operator.
    \end{Pro}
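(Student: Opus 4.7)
The plan is to derive the Walnut representation by treating the operator $\bd{S}_{\bd{g},\bd{h},\bd{b}}$ as a sum over $n$ of the ``layer'' operators
\begin{equation*}
\bd{S}_n f := \sum_{m\in\ZZ} \langle f,\bd{M}_{mb_n}g_n\rangle \bd{M}_{mb_n}h_n,
\end{equation*}
and recognizing each $\bd{S}_n$ as a Poisson-summation identity in disguise. Concretely, for fixed $n$ and $t\in\RR$, setting $F_n(t)=f(t)\overline{g_n(t)}$, the coefficient $\langle f,\bd{M}_{mb_n}g_n\rangle$ equals $\widehat{F_n}(mb_n)$, so the inner sum becomes $\sum_m \widehat{F_n}(mb_n)\,e^{2\pi i mb_n t}$. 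By Poisson summation applied with period $b_n^{-1}$, this equals $b_n^{-1}\sum_k F_n(t+kb_n^{-1})$, which after multiplication by $h_n(t)$ and reindexing $k\mapsto -k$ yields exactly the desired $n$-th summand $\sum_k b_n^{-1}h_n\,\bd{T}_{kb_n^{-1}}\overline{g_n}\,\bd{T}_{kb_n^{-1}}f$.

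The nontrivial work is turning this formal manipulation into an identity in $\LtR$ and justifying the interchange of the $m$-sum with the outer $n$-sum. First I would fix $n$ and argue the Poisson step rigorously: since $f\in\LtR$ and $g_n\in\LtR$, $F_n\in L^1(\RR)$, so the $b_n^{-1}$-periodization $\Phi_n(t):=\sum_k F_n(t+kb_n^{-1})$ is well-defined in $L^1([0,b_n^{-1}])$ with Fourier coefficients $b_n\widehat{F_n}(mb_n)$. Multiplication by $h_n\in\LtR$ then gives $\bd{S}_n f$ as an $\LtR$-function, provided $h_n\Phi_n\in\LtR$. In case (i), the Wiener-space assumption on $g_n,h_n$ ensures that the relevant periodizations lie in $L^\infty$ with summable norms, which is the standard setting in \cite{doma11}; the arguments carry over verbatim. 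In case (ii), the Bessel assumption provides the boundedness one needs: the analysis operator $\bd{C}_{\mathcal{G}(\bd{g},\bd{b})}$ and the synthesis operator $\bd{D}_{\mathcal{G}(\bd{h},\bd{b})}$ are bounded on $\LtR$ and $\ell^2(\ZZ^2)$ respectively, so the partial sums over $m$ (for fixed $n$) and then over $n$ converge unconditionally in $\LtR$.

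The main obstacle I expect is the convergence of $\sum_n \bd{S}_n f$ and the legitimacy of identifying it with $\bd{S}_{\bd{g},\bd{h},\bd{b}}f$ in the Bessel case, since the pointwise Poisson identity alone does not yield this. The strategy is a density argument: on the dense subspace of $f\in\LtR$ that are compactly supported and bounded, each $F_n$ is compactly supported, so the $k$-sum in $\Phi_n$ is finite and the Poisson identity, together with the observation that $\widehat{F_n}\in C_0(\RR)$, makes the interchange of $m$- and $n$-summation valid (only finitely many terms are nonzero for each $t$ in any bounded set, provided the windows also have reasonable local behaviour, otherwise one works with partial sums and passes to a limit). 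On this dense subspace one has the pointwise/$L^2$ equality between $\bd{S}_{\bd{g},\bd{h},\bd{b}}f$ and the Walnut series. Both sides extend to bounded operators on $\LtR$ (the left by the Bessel hypothesis, the right by a direct $\ell^2$-estimate on $\sum_{n,k}b_n^{-1}h_n\bd{T}_{kb_n^{-1}}\overline{g_n}\bd{T}_{kb_n^{-1}}$, again from Bessel bounds of $\mathcal{G}(\bd{g},\bd{b})$ and $\mathcal{G}(\bd{h},\bd{b})$). Equality on a dense subspace plus boundedness gives equality on all of $\LtR$.

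The Gabor and frame-operator specializations are then immediate: setting $g_n=\bd{T}_{na}g$, $b_n=b$ recovers the classical Walnut representation, while $\bd{h}=\bd{g}$ turns $\bd{S}_{\bd{g},\bd{h},\bd{b}}$ into $\bd{S}_{\mathcal{G}(\bd{g},\bd{b})}$. Since the paper explicitly refers to \cite{doma11} for the detailed computations, I would keep the proof sketch at this level and only verify that the modifications required to accommodate the Bessel case (in place of $W(L^\infty,\ell^1)$ windows) amount to replacing Wiener-norm estimates by Bessel-bound estimates in the convergence arguments, exactly as indicated in the excerpt.
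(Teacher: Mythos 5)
Your proposal is correct and follows the standard periodization/Poisson-summation route (expanding $\sum_m \langle f,\bd{M}_{mb_n}g_n\rangle\bd{M}_{mb_n}h_n$ via the Fourier series of the $b_n^{-1}$-periodization of $f\overline{g_n}$), which is exactly the approach of the cited reference \cite{doma11}; the paper itself gives no proof, only the remark that the Bessel case requires minor modifications, and your density-plus-boundedness argument is the right way to supply them.
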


    The Walnut representation shows that the frame operator maps a function $f\in\LtR$ onto a sum of weighted, translated copies of itself, where the weight functions are given by $\omega_{n,k} := b_n^{-1}g_n\bd{T}_{-kb_n^{-1}}\overline{g_n}$, for all $n,k\in\ZZ$ and the corresponding translates are $\bd{T}_{-kb_n^{-1}}$.

    The \emph{painless case} result \eqref{eq:nspainless} can be derived from the Walnut representation easily: If $\supp(g_n) \subseteq [c_n,d_n]$ and $b_n \leq \frac{1}{d_n-c_n}$,
    then $\omega_{n,k} = b_n^{-1}g_n\bd{T}_{-kb_n^{-1}}\overline{g_n} \equiv 0$ for all $k\neq 0$ and thus $\bd{S}$ is diagonal. Furthermore, boundedness of the sum in \eqref{eq:nspainless} is a necessary condition
    for any NSG system to constitute a Bessel sequence, 
    
    \begin{Pro}\label{pro:diagsbound}
      Let $\mathcal{G}(\bd{g},\bd{b})$ and $\mathcal{G}(\bd{h},\bd{b})$ be nonstationary Gabor Bessel sequences with $b_n\in\RR^+$ and $g_n\in\LtR$, for all $n\in\ZZ$. Let $B$ be a joint Bessel bound of 
      $\mathcal{G}(\bd{g},\bd{b})$ and $\mathcal{G}(\bd{h},\bd{b})$. Then 
      \begin{equation}\label{eq:boundeddiags}
	\sum_{n\in\ZZ} b^{-1}_n |h_n\bd{T}_x \overline{g_n}| \leq B\quad \text{a.e.}
      \end{equation}
      In particular $\sum_{n\in\ZZ} b^{-1}_n |g_n|^2 \leq B$ almost everywhere. 
    \end{Pro}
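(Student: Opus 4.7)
The plan is a reduction to the diagonal bound $\sum_n b_n^{-1}|g_n|^2\leq B$ a.e.\ (and the same inequality for the family $(h_n)$), followed by a direct Bessel/Lebesgue-differentiation argument for that bound. The reduction is immediate: the weighted Cauchy--Schwarz inequality in $\ell^2$ gives
\[
\sum_n b_n^{-1}\bigl|h_n(t)\overline{g_n(t-x)}\bigr| \leq \Bigl(\sum_n b_n^{-1}|h_n(t)|^2\Bigr)^{1/2}\Bigl(\sum_n b_n^{-1}|g_n(t-x)|^2\Bigr)^{1/2},
\]
so the diagonal bound, applied to $(h_n)$ at $t$ and to $(g_n)$ at $t-x$, yields the joint bound $B$ for a.e.\ pair $(t,x)$. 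The ``in particular'' statement of the proposition is just the special case $h_n=g_n$ at $x=0$ of the diagonal bound, so in effect only the diagonal bound needs to be proved.

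For the diagonal bound I would first truncate: for fixed $N\in\NN$ set $A_N:=\{n:b_n\leq N\}$, noting that the subsystem indexed by $A_N$ remains Bessel with the same bound $B$. Choose $t_0\in\RR$ and a small $\epsilon<1/(2N)$, and test with the unit vector $f_\epsilon:=(2\epsilon)^{-1/2}\chi_{[t_0-\epsilon,t_0+\epsilon]}$. For each $n\in A_N$, Parseval applied to the $b_n^{-1}$-periodization of $F_n:=f_\epsilon\overline{g_n}$ yields the standard identity
\[
\sum_m\bigl|\langle f_\epsilon,\bd{M}_{mb_n}g_n\rangle\bigr|^2 = b_n^{-1}\sum_k\int F_n(s)\overline{F_n(s+kb_n^{-1})}\,ds,
\]
and since $F_n$ is supported in an interval of length $2\epsilon<b_n^{-1}$ only $k=0$ survives, giving $(2\epsilon)^{-1}b_n^{-1}\int_{t_0-\epsilon}^{t_0+\epsilon}|g_n|^2$. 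Dropping the non-negative Bessel contributions coming from indices $n\notin A_N$ then produces
\[
\sum_{n\in A_N}\frac{b_n^{-1}}{2\epsilon}\int_{t_0-\epsilon}^{t_0+\epsilon}|g_n|^2\,ds \leq B.
\]
As $\epsilon\to 0$, the Lebesgue differentiation theorem sends each inner factor to $|g_n(t_0)|^2$ for a.e.\ $t_0$, and Fatou's lemma turns the displayed inequality into $\sum_{n\in A_N}b_n^{-1}|g_n(t_0)|^2\leq B$ a.e. A final monotone-convergence step $N\to\infty$ removes the truncation.

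The only mildly subtle point is the possible unboundedness of $(b_n)$: were $(b_n)$ bounded, a single small $\epsilon$ would work simultaneously for all $n$ and the $A_N$-truncation would be unnecessary. That truncation, disposed of by Fatou and monotone convergence, is therefore the ``hardest'' step, which is to say there is no genuinely hard step: the periodization identity and the support-disjointness argument eliminating the $k\neq 0$ terms are standard, and the reduction by Cauchy--Schwarz is pure bookkeeping.
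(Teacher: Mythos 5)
Your proof is correct and follows essentially the same route as the paper: test the Bessel inequality with normalized indicators $\sqrt{2\epsilon}^{-1}\chi_{[t_0-\epsilon,t_0+\epsilon]}$, use the periodization/Parseval identity so that only the $k=0$ term survives, pass to the limit in $\epsilon$ and in the truncation, and finish the off-diagonal case by Cauchy--Schwarz. The only (immaterial) difference is that you truncate by $\{n: b_n\le N\}$ while the paper truncates to $|n|\le N$ and then shrinks $\epsilon$ accordingly.
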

    \begin{proof}
      To prove $\sum_{n\in\ZZ} b^{-1}_n |g_n|^2 \leq B$, we retrace the steps of a proof by Chui and Shi~\cite{chsh93} for Wavelet frames. By the Bessel property of $\mathcal{G}(\bd{g},\bd{b})$ and Plancherel's equality for Fourier series,
      \begin{align*}
	B\|f\|^2 \geq \sum_{n,k\in\ZZ} |\langle f,g_{n,k} \rangle|^2 & = \sum_{n,k\in\ZZ} \left| \int_0^{b_n^{-1}} \sum_{l\in\ZZ} \bd{T}_{lb_n^{-1}}f(t)\bd{T}_{lb_n^{-1}}\overline{g_n(t)}e^{-2\pi ikb_n t}~dt\right|^2 \\
	& = \sum_{n\in\ZZ} b_n^{-1} \int_0^{b_n^{-1}} \left|\sum_{l\in\ZZ} \bd{T}_{lb_n^{-1}}f(t)\bd{T}_{lb_n^{-1}}\overline{g_n(t)}\right|^2~dt. \\
      \end{align*}
      Observe $b_n^{-1}$-periodicity of the integrand. For all $0<N\in\ZZ$, we can choose some $\epsilon > 0$, such that for all $t_0\in\RR$ and $f = \sqrt{2\epsilon}^{-1}\chi_{[t_0-\epsilon,t_0+\epsilon]}$
      \begin{align*}
	\lefteqn{\sum_{n=-N}^N \frac{1}{b_n} \int_{t_0-b_n^{-1}/2}^{t_0+b_n^{-1}/2} \left|\sum_{l\in\ZZ} \bd{T}_{lb_n^{-1}}f(t)\bd{T}_{lb_n^{-1}}\overline{g_n(t)}\right|^2~dt} \\
	& = \sum_{n=-N}^N \frac{1}{2\epsilon b_n} \int_{t_0-\epsilon}^{t_0+\epsilon} |g_n(t)|^2~dt \leq B\|f\| = B
      \end{align*}
      holds. Subsequently taking limits over $\epsilon$ and $N$ proves $\sum_{n\in\ZZ} b^{-1}_n |g_n|^2 \leq B$ almost everywhere. 
      
      The general case follows by Cauchy-Schwarz' inequality:
    \begin{equation}
      \sum_{n\in\ZZ} b_n^{-1}|h_n\bd{T}_x\overline{g_n}| \leq \Big(\sum_n b_n^{-1}|h_n|^2 \sum_l b_l^{-1}|g_l|^2\Big)^{1/2} \leq B, 
    \end{equation}
    for all $x\in\RR$.
    \end{proof}

    The Walnut representation is a very handy tool, describing the action of NSG frame operators in an intuitive way. We would like to use a slightly more general definition, though.
    
    \begin{Def}\label{def:wallike}
      Let $\Lambda$ be a countable index set, $X$ a dense subspace of $\LtR$ and $\bd{W} : \LtR \rightarrow \LtR$ a bounded linear operator. If sequences $(\omega_\lambda \in L^\infty(\RR))_{\lambda\in\Lambda}$ and $(a_\lambda\in\RR)_{\lambda\in\Lambda}$ exist such that 
      \begin{equation}\label{eq:wallike}
	\bd{W}f = \sum_{\lambda\in\Lambda}\omega_\lambda\bd{T}_{a_\lambda}f, \text{ for all } f\in X
      \end{equation}
      and the sum on the right-hand side is unconditionally convergent,
      then we say that $\bd{W}$ has a \emph{Walnut-like representation} with \emph{weights} $\omega_\lambda$ and \emph{translation constants} $a_\lambda$.
    \end{Def}
    
    For regular Gabor systems, the Walnut representation has been shown to be absolutely convergent by Janssen~\cite{ja98}. For more general NSG systems, we discuss an alternate Walnut-like representation of the nonstationary Gabor frame operator and its unconditional convergence in Section \ref{ssec:duality}.

    Under weak additional assumptions, we can show that in fact, the weights corresponding to a fixed translate of $f$ in \eqref{eq:wallike} are bounded by the operator norm of $\bd{W}$.
    
    \begin{Lem}\label{lem:sepeqbnd}
      Let $\bd{W}:\LtR\mapsto\LtR$ be a bounded linear operator with Walnut-like representation. % as per Definition \ref{def:wallike}. 
      If $\|\bd{W}\|_{op} = C < \infty$ and for all $c,d\in\RR$ with $c< d$, $\{a_\lambda~:~ \omega_\lambda\mid_{[c,d]}\neq 0\}_{\lambda\in\Lambda}$ is free of accumulation points, then 
      \begin{equation}\label{eq:bounded}
	\Big| \sum_{\substack{\lambda\in\Lambda \\ a_\lambda = a_{\lambda_0}}} \omega_\lambda \Big| \leq C\quad \text{a.e.},
      \end{equation}
      for all $\lambda_0\in\Lambda$.
    \end{Lem}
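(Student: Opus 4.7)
The plan is to probe $\bd{W}$ with a narrow $L^2$-normalized bump centered so that, after translation by $a_{\lambda_0}$, its mass lands on a chosen base point $t_0$, and to exploit the isolation of $a_{\lambda_0}$ among the active translation parameters to read off the pointwise values of $S(t):=\sum_{\lambda:\,a_\lambda=a_{\lambda_0}}\omega_\lambda(t)$ directly from $\bd{W}f_\epsilon$.

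Fix $\lambda_0\in\Lambda$. By the unconditional convergence in (\ref{eq:wallike}), terms with equal translation parameters may be grouped, so $S$ is a well defined element of $L^2_{\rm loc}(\RR)$ and (\ref{eq:wallike}) takes the regrouped form $\bd{W}f=\sum_{a}\bigl(\sum_{\lambda:\,a_\lambda=a}\omega_\lambda\bigr)\bd{T}_a f$. Let $t_0$ be a Lebesgue point of $|S|^2$, which is almost every point of $\RR$. Applying the hypothesis with $[c,d]=[t_0-1,t_0+1]$, the set $A:=\{a_\lambda:\omega_\lambda|_{[t_0-1,t_0+1]}\neq 0\}$ has no accumulation points, so there exists $\delta>0$ such that $A\cap(a_{\lambda_0}-\delta,a_{\lambda_0}+\delta)\subseteq\{a_{\lambda_0}\}$.

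Next, for $0<\epsilon<\delta/2$ set $f_\epsilon:=(2\epsilon)^{-1/2}\chi_{[t_0-a_{\lambda_0}-\epsilon,\,t_0-a_{\lambda_0}+\epsilon]}$, replacing $f_\epsilon$ by an $X$-approximation with the same compact support if necessary and passing to the limit via continuity of $\bd{W}$; then $\|f_\epsilon\|=1$. For $t\in[t_0-\epsilon,t_0+\epsilon]$, the translate $\bd{T}_{a_\lambda}f_\epsilon$ is nonzero at $t$ only when $a_\lambda\in(a_{\lambda_0}-2\epsilon,a_{\lambda_0}+2\epsilon)$; combined with $\omega_\lambda(t)\neq 0$, which forces $a_\lambda\in A$, our choice of $\delta$ leaves only $a_\lambda=a_{\lambda_0}$. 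Consequently, on $[t_0-\epsilon,t_0+\epsilon]$,
\[
\bd{W}f_\epsilon(t)=S(t)\,\bd{T}_{a_{\lambda_0}}f_\epsilon(t)=\frac{S(t)}{\sqrt{2\epsilon}}.
\]

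Finally, the operator bound delivers
\[
C^2\ge\|\bd{W}f_\epsilon\|^2\ge\frac{1}{2\epsilon}\int_{t_0-\epsilon}^{t_0+\epsilon}|S(t)|^2\,dt,
\]
and letting $\epsilon\to 0$, Lebesgue differentiation at $t_0$ yields $|S(t_0)|\le C$, establishing (\ref{eq:bounded}) for almost every $t_0$. The main delicate point is the joint handling of two independent vanishing conditions, namely $\omega_\lambda(t)=0$ versus $t\notin\supp(\bd{T}_{a_\lambda}f_\epsilon)$, in order to rule out all contributions with $a_\lambda\neq a_{\lambda_0}$; subsidiary care is required to justify the regrouping of the unconditionally convergent Walnut-like series and to reach characteristic-function bumps outside $X$ by density and continuity of $\bd{W}$.
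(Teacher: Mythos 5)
Your proof is correct and follows essentially the same strategy as the paper's: probe $\bd{W}$ with a narrow normalized indicator bump translated by $-a_{\lambda_0}$ and use the absence of accumulation points among the active translation parameters to isolate the $a_{\lambda_0}$-contribution on a small interval. The only difference is presentational — you argue directly via the Lebesgue differentiation theorem at a Lebesgue point of $|S|^2$, whereas the paper runs the same computation as a proof by contradiction on a positive-measure set where the bound would fail; both versions share the same (glossed-over) points concerning the local integrability of the regrouped weight sum and the choice of compactly supported approximants in $X$.
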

    \begin{proof}
      Without loss of generality, assume 
      \begin{equation*}
	\sum_{\substack{\lambda\in\Lambda \\ a_\lambda = a_{\lambda_0}}} \omega_\lambda \geq C_0 > C \text{ a.e. on } M \text{ with } \mu(M)>0.
      \end{equation*}
      Then for all $\delta > 0$, there exists $l\in\ZZ$, such that $\mu(M \cap B_{\delta}(2l\delta)) > 0$. Furthermore, since $\{a_\lambda~:~ \omega_\lambda\mid_{[c,d]}\neq 0\}_{\lambda\in\Lambda}$ has no accumulation points for all $c<d$, there exists for any fixed $l\in\ZZ$ a $\delta > 0$ such that 
      \begin{equation*}
	\{a_\lambda~:~ \omega_\lambda\mid_{B_{\delta}(2l\delta)}\neq 0\}_{\lambda\in\Lambda}\cap B_{2\delta}(a_{\lambda_0}) = \{a_{\lambda_0}\}.
      \end{equation*}
      Choose $l\in\ZZ$ and $\delta > 0$ such that the above equation holds and $\mu(M_l) > 0$, with $M_l = M \cap B_{\delta_0}(2l\delta_0)$. Take $f = \chi_{M_l-a_{\lambda_0}}$. If $f\in X$, then
      \begin{equation*}
	\left|\bd{W}f\mid_{B_{\delta_0}(2l\delta_0)}\right| \geq C_0|\bd{T}_{a_{\lambda_0}}f| \quad \Rightarrow \quad \|\bd{W}f\| \geq C_0\|f\|,
      \end{equation*}
      contradicting $\|\bd{W}\|_{op} = C < C_0$. If $f\notin X$, construct a sequence $(f_n \in X)_{n\in\NN}$ converging to $f$. For such a sequence, some $n_0\in\NN$ exists, such that $\|\bd{W}f_n\| > C\|f_n\|$, for all $n\geq n_0$.
    \end{proof}
    
    \begin{Rem}\label{rem:sumboundedness}
      If on the other hand, $\bd{W}:\LtR\mapsto\LtR$ is linear and $\bd{W}f$ can be written in the form \eqref{eq:wallike} for all $f$ in a dense subspace of $\LtR$, then $\bd{W}$ is guaranteed to be a bounded linear operator if $\sum_{\lambda\in\Lambda} \|\omega_\lambda\|_{\infty} < \infty$, by Cauchy-Schwarz' inequality.
    \end{Rem}

%%% ---------------------------------------------------------------------------------------------------------------
%%% -----------------------------------------Regular case----------------------------------------------------------    
%%% ---------------------------------------------------------------------------------------------------------------    
    
  \section{Results in the regular case}\label{sec:regcas}

	In this section, we recall a result of Christensen, Kim and Kim~\cite{chkiki10} and state our own results in a simplified form for regular Gabor frames. Thus, this section demonstrates the application of our results to a classical setting and eases the reader into the technicalities necessary for the description of the general case. 
    The results discussed herein are special cases of and follow directly from the results presented in Section \ref{sec:nsgrap}.
    
    We start by fixing some notation. For the rest of this section, we assume $g$, as used in the Gabor system $\mathcal{G}(g,a,b)$, to be compactly supported with $\supp(g) = [c,d]$.
    We write 
    \begin{equation}\label{eq:suppDUgab}
      \begin{split}
      I_{n,0} & = [c,d]+na,\\ 
      I^+_{n,k} & = [c-(k-1)a+kb^{-1},d]+na,\\
      I^-_{n,k} & = [c,d+(k-1)a-kb^{-1}]+na,
      \end{split}
    \end{equation}
    for all $k\in\NN,n\in\ZZ$. 
    
    These sets will be helpful in describing both the support of the weight functions of the Walnut-like representation of $\bd{S}^{-1}$, as well as the support of the canonical dual
    window $\bd{S}^{-1}g$ in the case that $\mathcal{G}(g,a,b)$ constitutes a frame. The conditions placed on $\mathcal{G}(g,a,b)$ in Theorems \ref{thm:chkiki} and \ref{thm:regcas} will be seen to imply 
    $I^\pm_{n,k+1}\subseteq I^\pm_{n,k}\subseteq I_{n,0}$ and $I^+_{n,1} \cap I^-_{n,1} = \emptyset$ for all $n\in\ZZ,k\in\NN$.\\
    
    The following theorem combines two results in~\cite{chkiki10} rewritten in our notation:

    \begin{Thm}[Christensen, Kim, Kim]\label{thm:chkiki} Let $g\in\LtR$ supported on $[-1,1]$ and $b\in ]1/2,1[$. Assume that $\mathcal{G}(g,1,b)$ is a frame and set $K: = \lfloor\frac{b}{1-b}\rfloor$, then 
      \begin{enumerate}
      \item[(i)] \cite[Th 2.1]{chkiki10} there exists a dual window $h\in\LtR$ with $\supp{h}\subseteq [-K,K]$.
      \item[(ii)] \cite[Lem 3.2]{chkiki10} If $g$ is bounded, $K > 1$ and $\mathcal{G}(h,1,b)$, with $h\in\LtR$ supported on $[-K,K]$, is a dual frame, then 
	$h$ is essentially supported on a subset of 
	\begin{equation*}
	  I_{0,0} \cup \bigcup\limits_{k=1}^{K} I^-_{-k,k} \cup I^+_{k,k}.
	\end{equation*}
      \end{enumerate}
    \end{Thm}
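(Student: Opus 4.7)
This theorem is a restatement of results from \cite{chkiki10}; I outline how I would prove it independently via the Wexler--Raz/Ron--Shen/Janssen duality conditions, which assert that $\mathcal{G}(g,1,b)$ and $\mathcal{G}(h,1,b)$ are dual frames if and only if
$$\sum_{n \in \ZZ} \overline{g(t-n)}\, h(t-n-m/b) = b\, \delta_{m,0}, \qquad m \in \ZZ,~\text{a.e. } t \in \RR. \qquad (\star)$$

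\emph{For part (i),} I would exploit $\supp(g) \subseteq [-1,1]$ together with $b \in\, ]1/2, 1[\,$ (hence $1/b \in\, ]1, 2[\,$): the sum in $(\star)$ has at most two nonvanishing terms for each fixed $t$, and further imposing $\supp(h) \subseteq [-K, K]$ makes $(\star)$ trivial for $|m| > K$. The key quantitative observation is that $K = \lfloor b/(1-b) \rfloor$ is the largest integer satisfying $K/b \leq K+1$, which ensures the finite system $(\star)_{|m| \leq K}$ has sufficient ``reach'' to be consistent with the support constraint on $h$. On each fundamental domain of length one, $(\star)$ reduces to a finite linear system in a handful of unknowns $h(t - n - m/b)$, and the frame property of $\mathcal{G}(g,1,b)$ guarantees almost-everywhere invertibility of the local coefficient matrix, so $h$ supported on $[-K,K]$ can be constructed explicitly.

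\emph{For part (ii),} let $h$ be any dual window with $\supp(h) \subseteq [-K,K]$ and set $U := I_{0,0} \cup \bigcup_{k=1}^{K} (I^-_{-k,k} \cup I^+_{k,k})$. I would show $h$ vanishes a.e.\ on $[-K,K] \setminus U$ by an induction over the gap components of that complement, ordered outermost-to-innermost. For the outermost gap on the positive side, pick $t_0$ in the gap and apply $(\star)$ at $t = t_0 + K/b$ with $m = K$: by the choice of $K$, every term in the sum except the one containing $h(t_0)$ involves arguments lying outside $[-K, K]$, where $h$ vanishes by hypothesis. The surviving equation reads $\overline{g(t-n_0)}\, h(t_0) = 0$ for the appropriate $n_0$; since $g$ is bounded and nonzero a.e.\ on its support (the latter being a consequence of the frame inequality), $h(t_0) = 0$ a.e.\ on that gap. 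Iterating with $m = K-1, K-2, \ldots, 1$, and symmetrically for negative $m$, each new equation involves $h$-values either in $[-K,K]^c$ or in gaps already shown to be null, leaving a single $h(t_0)$-term to be forced to zero.

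\emph{Main obstacle.} The principal difficulty is combinatorial bookkeeping: one must verify that at each induction step, the shifts $t \pm j/b$ land in exactly the right mix of ``known-zero'' regions and the current target gap. The intervals $I^\pm_{n,k}$ are tailored precisely so that this cascade closes, with their lengths shrinking as $k \to K$, reflecting the diminishing room to manoeuvre near the support boundary. An alternative route for part (i) is to construct $h = \bd{S}^{-1} g$ explicitly via the Neumann series~\eqref{eq:neumann} and to track supports iterate by iterate; this parallels the general strategy developed in Section~\ref{sec:nsgrap}.
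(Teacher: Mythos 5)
This theorem is not proved in the paper at all: it is imported, up to notation, from \cite{chkiki10}, and the only internal commentary is the remark immediately following it that the duality condition \eqref{eq:roshe} is the main tool used there. Your choice of the Ron--Shen/Janssen conditions is therefore the right starting point, but your outline asserts rather than proves the two genuinely hard steps, and one of them rests on a false claim.

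For part (ii), your cascade reduces, at a.e.\ point $t_0$ of a gap, to a single surviving relation of the form $\overline{g(t_0+m/b)}\,h(t_0)=0$, from which you conclude $h(t_0)=0$ because ``$g$ is nonzero a.e.\ on its support, \ldots a consequence of the frame inequality.'' That implication is false: with $a=1$ and $\supp(g)\subseteq[-1,1]$, the frame inequality forces only the periodized bound $|g(t)|^2+|g(t-1)|^2\geq Ab$ a.e., so $g$ may vanish on a positive-measure subset of $[-1,1]$ provided its unit translate does not vanish there. On the set where the surviving coefficient $\overline{g(t_0+m/b)}$ vanishes, the duality system places no constraint on $h(t_0)$ through that equation, and the induction stalls; handling this (by combining several of the equations \eqref{eq:roshe}, or by invoking the Bessel property of $\mathcal{G}(h,1,b)$ and the boundedness of $g$) is precisely the nontrivial content of \cite[Lem.~3.2]{chkiki10}. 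For part (i), the sentence ``the frame property of $\mathcal{G}(g,1,b)$ guarantees almost-everywhere invertibility of the local coefficient matrix'' under the added constraint $\supp(h)\subseteq[-K,K]$ \emph{is} the theorem: since $b>1/2$ the frame operator is not diagonal, and passing from the frame property to pointwise solvability of the truncated duality system is exactly what \cite[Th.~2.1]{chkiki10} establishes. Finally, your fallback for (i) --- take $h=\bd{S}^{-1}g$ and track supports through the Neumann series --- produces the canonical dual, whose support is only contained in $I_{0,0}\cup\bigcup_{k=1}^{K}\bigl(I^-_{-k,k}\cup I^+_{k,k}\bigr)$ by Theorem \ref{thm:regcas}(ii); since $I^{+}_{K,K}=[K/b,\,K+1]$ reaches beyond $K$, this set is not contained in $[-K,K]$, so that route does not deliver (i) as stated.
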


    The main tool used in \cite{chkiki10} is the duality condition for Gabor Bessel sequences $\mathcal{G}(g,a,b)$ and $\mathcal{G}(h,a,b)$ to form dual frames~\cite{rosh95,rosh97,ja98}
    \begin{equation}\label{eq:roshe}
      b^{-1}\sum_{n\in\ZZ} T_{na}h\overline{T_{kb^{-1}+na}g} = \begin{cases}
								  1\ a.e. & \text{for } k = 0,\\
								  0\ a.e. & \text{else.}
								\end{cases}
    \end{equation}
    In Section \ref{ssec:duality}, we will discuss the existence of a similar duality condition for nonstationary Gabor systems $\mathcal{G}(\bd{g},\bd{b})$ and $\mathcal{G}(\bd{h},\bd{b})$.
    
    Our following result is a restriction of Theorem \ref{thm:mainres} to Gabor systems, showing that the canonical dual window of $\mathcal{G}(\bd{g},\bd{b})$ satisfies the properties attributed
    to $h$ in Theorem \ref{thm:chkiki} (i) and (ii). The conditions on $g,\ a$ and $b$, while written differently as a preparation for Theorem \ref{thm:mainres}, are equivalent to those in Theorem \ref{thm:chkiki}. 
    We note that, by restricting $g$ to be a continuous, compactly supported function
    with finitely many zeros inside its support, Christensen, Kim and Kim show that the frame property of $\mathcal{G}(g,1,b)$ is equivalent to the existence of a continuous function
    $h\in\LtR$, with support contained in $[-K,K]$ such that $g,h$ satisfy the duality relations above. Our result investigates the structure of the inverse frame operator and derives properties
    of the canonical dual frame, but we do not attempt to characterize the frame property.

    \begin{Thm}\label{thm:regcas} Let $g\in\LtR$ with $\supp(g)\subseteq [c,d]$ and $d > c$. Furthermore $a\in [\frac{d-c}{2},d-c[$, $b\in ]0,\frac{1}{a}[$ and $K = \lfloor\frac{(d-c-a)b}{1-ab}\rfloor$. If $\mathcal{G}(g,a,b)$ is a frame, the following hold.
      \begin{enumerate}
      \item[(i)] The inverse frame operator $\bd{S}^{-1}$ has a Walnut-like representation of the form 
	\begin{equation}
	  \bd{S}^{-1}f = \sum_{k=0}^K \omega_k \bd{T}_{-kb^{-1}}f,
	\end{equation}
	with $\supp(\omega_k) \subseteq \bigcup_{n\in\ZZ} I^{-\operatorname{sgn}(k)}_{n,|k|}$ for all $k\neq 0$.
      \item[(ii)] The canonical dual window $\tilde{g} = \bd{S}^{-1}g\in\LtR$ satisfies
	  \begin{equation}
	    \supp(\tilde{g}) \subseteq I_{0,0} \cup \bigcup_{k=1}^{K} I^-_{-k,k} \cup I^+_{k,k}.% \cup I_X,
	  \end{equation}
      \end{enumerate}
    \end{Thm}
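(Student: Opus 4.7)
Both assertions are special cases of Theorem~\ref{thm:mainres}, obtained by setting $b_n\equiv b$ and $g_n = \bd{T}_{na}g$ so that $\mathcal{G}(\bd{g},\bd{b})$ reduces to $\mathcal{G}(g,a,b)$. Under this specialization the translation-invariant sets $I^\pm_{n,k}$ of~\eqref{eq:suppDUgab} coincide with the general support sets appearing in that theorem, and the cutoff $K = \lfloor(d-c-a)b/(1-ab)\rfloor$ is precisely the largest $k$ for which $I^\pm_{n,k}$ has positive Lebesgue measure, so every sum below is automatically finite. The assumption $a \in [\tfrac{d-c}{2}, d-c[$ guarantees that each window $\bd{T}_{na}g$ overlaps only its immediate neighbors, which is exactly the overlap restriction under which Theorem~\ref{thm:mainres} describes the structure of the inverse frame operator and of the canonical dual.

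Independently of Theorem~\ref{thm:mainres}, a direct proof would start from the Walnut representation given by Proposition~\ref{pro:walnut},
\[
  \bd{S}f = \sum_{k\in\ZZ} \alpha_k\,\bd{T}_{kb^{-1}}f,\qquad \alpha_k = b^{-1}\sum_{n\in\ZZ}(\bd{T}_{na}g)(\bd{T}_{kb^{-1}+na}\overline{g}).
\]
Compact support of $g$ forces $\alpha_k\equiv 0$ once $|k|b^{-1}\geq d-c$, and an elementary intersection of the supports of the two factors yields $\supp(\alpha_k)\subseteq \bigcup_n I^{-\sgn(k)}_{n,|k|}$ for $k\neq 0$. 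Substituting this into the Neumann series~\eqref{eq:neumann}, $\bd{S}^{-1} = C\sum_{j\geq 0}(\bd{I}-C\bd{S})^j$ with $C=2/(A+B)$, the task is to describe the Walnut-like representation of each power and reassemble them into a representation of $\bd{S}^{-1}$.

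The hard part is the inductive support bookkeeping for the products that arise when composing Walnut-like operators. Writing $\bd{R} := \bd{I} - C\bd{S}$ with weights $\rho_l$, iteration produces
\[
  \omega_m^{(j+1)}(t) = \sum_{l\in\ZZ} \rho_l(t)\,\omega_{m-l}^{(j)}(t-lb^{-1}),
\]
and one must verify that these weights remain supported in $\bigcup_n I^{-\sgn(m)}_{n,|m|}$. This boils down to a geometric claim about the sets $I^\pm_{n,k}$: whenever $t\in I^{-\sgn(l)}_{n,|l|}$ and $t - lb^{-1}\in I^{-\sgn(m-l)}_{n',|m-l|}$, there must exist an index $n''$ with $t\in I^{-\sgn(m)}_{n'',|m|}$. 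The condition $a\geq (d-c)/2$ is essential, as it forbids interactions beyond nearest neighbors and reduces the verification to a finite case analysis on the endpoints in~\eqref{eq:suppDUgab}. Since $I^\pm_{n,k}=\emptyset$ for $|k|>K$, the Neumann sum then collapses to finitely many translates, yielding~(i).

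For (ii), apply the Walnut-like form of $\bd{S}^{-1}$ to $g$, giving $\tilde g = \sum_k \omega_k\,\bd{T}_{-kb^{-1}}g$, and bound the support of each summand by intersecting $\supp(\omega_k)\subseteq \bigcup_n I^{-\sgn(k)}_{n,|k|}$ with $\supp(\bd{T}_{-kb^{-1}}g)\subseteq [c-kb^{-1},\,d-kb^{-1}]$. The hypothesis $a\geq (d-c)/2$ again restricts the indices $n$ that can contribute to a short list per $k$, and a careful case-by-case verification, in which any overshoot from $\omega_k$ is absorbed into a set $I^\pm_{-k',k'}$ with $k' < k$, shows that the union of all such intersections is contained in $I_{0,0}\cup\bigcup_{k=1}^K\bigl(I^-_{-k,k}\cup I^+_{k,k}\bigr)$, yielding the claimed support for $\tilde g$.
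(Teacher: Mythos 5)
Your proposal is correct and follows the paper's own route: the paper proves Theorem \ref{thm:regcas} exactly by specializing Theorem \ref{thm:mainres} to $b_n=b$, $g_n=\bd{T}_{na}g$ (so that $B^\pm_{n,k}=kb^{-1}$ and one sets $\omega_k=\sum_n\omega_{n,k}$), using part (iv) to obtain the finite cutoff $K$; your supplementary ``direct'' sketch via the Walnut representation, the Neumann series, and the inductive support bookkeeping for the sets $I^\pm_{n,k}$ is essentially the paper's proof of Theorem \ref{thm:mainres} itself, not a different argument.
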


    Borrowing intuition from the discrete case, the inverse frame operator $\bd{S}^{-1}$ can, according to Theorem \ref{thm:regcas} (i), informally be interpreted as an infinitesimal matrix, supported only on the main diagonal and a discrete set of side-diagonals which in turn are non-zero only on specific intervals. For an illustration, see Figure \ref{fig:gab_frameops}.
  
  \begin{figure}[t!]
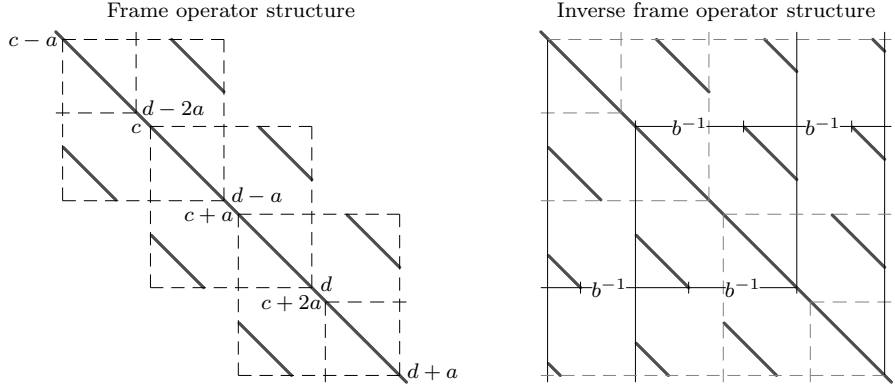

  \begin{center}
\providelength{\AxesLineWidth}       \setlength{\AxesLineWidth}{0.3pt}
\providelength{\plotwidth}           \setlength{\plotwidth}{8cm} % width of the axes only
\providelength{\LineWidth}           \setlength{\LineWidth}{0.4pt}
\providelength{\MarkerSize}          \setlength{\MarkerSize}{1.2pt}
\newrgbcolor{GridColor}{0.8 0.8 0.8}

% Begin Figure:-------------------------------------------
\psset{xunit=0.0014\plotwidth,yunit=0.0014\plotwidth}
%\psset{xunit=0.002398\plotwidth,yunit=0.002398\plotwidth}
\vspace{.3cm}\hspace{17pt}
% [inline block 0: 2 envs, 49137 chars -> data_tex | \begin{pspicture}(-3.843318,-1.921659)(418.921659,434.294931) ...]
%
\vspace{.1cm}
    \caption{Section of a Gabor frame operator and its inverse in the setting of Theorem \ref{thm:regcas} (schematic illustration). \emph{Left}: The weights correspond to side-diagonal entries of a matrix, with $\omega_0$ the main diagonal and $\omega_{\pm 1}$ located on side-diagonal $\pm b^{-1}$. Grey diagonal lines indicate non-zero entries in the side-diagonals/weights and we see that at most $3$ entries in each row are non-zero. Dashed lines indicate the support of the translates of $g$. \emph{Right}: The inverse frame operator additionally possesses a regularly spaced set of weights $\omega_{k}$ located on the side-diagonals $kb^{-1}$. Their non-zero entries are constrained by the support of the respective translates of $g$, indicated by horizontal and vertical lines. The parameter choice leads to shrinking support for weights located further from the main diagonal.}\label{fig:gab_frameops}
  \end{center}
  \end{figure}
    
    We see that Theorems \ref{thm:chkiki} and \ref{thm:regcas} are complementary and shed light on the same problem from somewhat different points of view.

    \begin{Exa}\label{ex:gabor1}
      Assume that $\mathcal{G}(g,\frac{7}{6},\frac{3}{5})$, with $g\in\LtR$ continuous, $\supp(g) = [-1,1]$ and $g(x)>0$ for all $x\in]-1,1[$, constitutes a frame. Then 
      \begin{equation*}
	  \bd{S}^{-1}f = \sum_{k\in\{-1,0,1\}} \omega_k \bd{T}_{-kb^{-1}}f,
      \end{equation*}
      with the essential supports of $\omega_1$ and $\omega_{-1}$ contained in $\bigcup_{n\in\ZZ} \left[-1+\frac{7n}{6},1+\frac{7n-10}{6}\right]$ and $\bigcup_{n\in\ZZ} \left[-1+\frac{7n+10}{6},1+\frac{7n}{6}\right]$, respectively.
      Consequently,
      \begin{equation*} 
	\supp(\bd{S}^{-1}g) \subseteq \left[-\frac{13}{6},-\frac{11}{6}\right] \cup \left[-1,1\right] \cup \left[\frac{11}{6},\frac{13}{6}\right],
      \end{equation*}
      since $I^+_{n,k} = I^-_{n,k} = \emptyset$, for all $k>1$.
    \end{Exa}

    This example raises the question when $\omega_k \equiv 0$ for $|k|>1$ can be guaranteed, i.e. the weights associated with $\bd{S}^{-1}$ are supported on the same set as those associated with $\bd{S}$.
    An answer is given in the following Corollary.

    \begin{Cor}
      Let $\mathcal{G}(g,a,b)$ as in Theorem \ref{thm:regcas}, with $b\in ]0,\frac{2}{d-c+a}[$, then 
      \begin{equation*}
	  \bd{S}^{-1}f = \sum_{k\in\{-1,0,1\}} \omega_k \bd{T}_{-kb^{-1}}f,
      \end{equation*}
      and $\omega_{1} \equiv 0$ outside $\bigcup_{n\in\ZZ} I^-_{n,1}$, $\omega_{-1} \equiv 0$ outside $\bigcup_{n\in\ZZ} I^+_{n,1}$. The same support conditions hold for $\bd{S}$, albeit with different weight functions.
    \end{Cor}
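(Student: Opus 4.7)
The plan is to deduce the corollary directly from Theorem~\ref{thm:regcas} and the Walnut representation in Proposition~\ref{pro:walnut}. The only nontrivial point is to convert the bound $b < \frac{2}{d-c+a}$ into a constraint on the integer $K$ appearing in Theorem~\ref{thm:regcas}.

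First I would show that $b < \frac{2}{d-c+a}$ forces $K \leq 1$. Starting from $b(d-c+a) < 2$ and subtracting $2ab$ from both sides gives $b(d-c-a) < 2(1-ab)$. Since $b < 1/a$ ensures $1-ab > 0$, this yields $\frac{(d-c-a)b}{1-ab} < 2$, so $K = \lfloor\frac{(d-c-a)b}{1-ab}\rfloor \leq 1$. Plugging this into Theorem~\ref{thm:regcas}(i) immediately gives the asserted representation $\bd{S}^{-1}f = \sum_{k\in\{-1,0,1\}} \omega_k \bd{T}_{-kb^{-1}}f$ together with the support inclusions for $\omega_{\pm 1}$, since all terms with $|k|\geq 2$ either lie outside the index range or have empty support (indeed $I^{\pm}_{n,2}=\emptyset$ is equivalent to $2b^{-1} > d-c+a$).

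Second, for the analogous claim about $\bd{S}$, I would apply Proposition~\ref{pro:walnut} specialized to the Gabor case (taking $g_n = \bd{T}_{na}g$ and $b_n = b$) and reindex via $k \mapsto -k$ to match the sign convention of Theorem~\ref{thm:regcas}. This produces
\begin{equation*}
  \bd{S}f = \sum_{k\in\ZZ} \omega_k^S \, \bd{T}_{-kb^{-1}}f, \qquad \omega_k^S = \sum_{n\in\ZZ} b^{-1} \, \bd{T}_{na}g \cdot \bd{T}_{na-kb^{-1}}\overline{g}.
\end{equation*}
For each $n$, the support of the $n$-th summand equals $[c+na,d+na]\cap[c-kb^{-1}+na,d-kb^{-1}+na]$. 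For $k=1$ this is $[c+na,d+na-b^{-1}]=I^-_{n,1}$; for $k=-1$ it is $[c+na+b^{-1},d+na]=I^+_{n,1}$. For $|k|\geq 2$ the intersection is empty, because $b < \frac{2}{d-c+a} < \frac{2}{d-c}$ forces $|k|b^{-1} > d-c$. Taking the union over $n$ yields exactly the same support sets for the weights of $\bd{S}$, albeit with different weight functions $\omega_k^S$.

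I do not expect any serious obstacle: Theorem~\ref{thm:regcas} carries essentially all of the content for $\bd{S}^{-1}$, and the statement for $\bd{S}$ is a direct reading of the Walnut representation. The only care needed is to reconcile the sign of the translation in the Walnut formula of Proposition~\ref{pro:walnut} (which produces $\bd{T}_{kb^{-1}}f$) with the convention $\bd{T}_{-kb^{-1}}f$ used in Theorem~\ref{thm:regcas}, a simple change of summation variable.
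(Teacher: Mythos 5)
Your proof is correct and follows essentially the same route as the paper's: apply Theorem \ref{thm:regcas}(i) and verify that $I^{\pm}_{n,k}=\emptyset$ for $k\geq 2$ under the hypothesis $b<\frac{2}{d-c+a}$, then read off the support of the weights of $\bd{S}$ directly from the Walnut representation \eqref{eq:walnut}. Your additional observation that the hypothesis forces $K\leq 1$ is a harmless (and correct) refinement of the same argument.
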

    \begin{proof}
	To obtain the statement for $\bd{S}^{-1}$, apply Theorem \ref{thm:regcas}(i) and simply check that $I^+_{n,k} = I^-_{n,k} = \emptyset$, for all $k>1$. For $\bd{S}$, the statement follows by applying the conditions of Theorem \ref{thm:regcas} to the Walnut representation \eqref{eq:walnut}.
    \end{proof}
    
    Under the conditions above, it is reasonable to assume that it is possible to find a dual window with support in $[c,d]$. As can be shown by applying the duality condition \eqref{eq:roshe}, this is true in many cases. 

    \begin{Cor}\label{thm:shsuppgab}
      Let $\mathcal{G}(g,a,b)$ with $g\in\LtR$ and $\supp(g)\subseteq [c,d]$ be a Gabor Bessel sequence as in Theorem \ref{thm:regcas} with $b \in ]0,\frac{2}{d-c+a}[$.
      %but not necessarily having a lower frame bound.  
      \begin{itemize}
      \item[(a)] Let $\mathcal{G}(h,a,b)$ a Gabor Bessel sequence with $h\in\LtR$, $\supp(h) \subseteq [c,d]$. $\mathcal{G}(g,a,b)$ and $\mathcal{G}(h,a,b)$ are dual frames if and only if the following hold:
	\begin{itemize}
      \item For almost every $x\in [c,c+a[$:
		  \begin{equation}\label{eq:dualityagab}
		    h\overline{g} + \bd{T}_{-a}(h\overline{g}) = b,\tag{a.i - Gabor}
		  \end{equation}
      \item For almost every $x\in I^-_{0,1}$:
		  \begin{equation}\label{eq:dualitybgab}
		    h\bd{T}_{-b^{-1}}\overline{g} = 0  \tag{a.ii - Gabor}
		  \end{equation}
      \item For almost every $x\in I^+_{0,1}$:
		  \begin{equation}\label{eq:dualitycgab}
		    h\bd{T}_{b^{-1}}\overline{g} = 0  \tag{a.iii - Gabor}
		  \end{equation}
	\end{itemize}
	\item [(b)] A Bessel sequence $\mathcal{G}(h,a,b)$ with $h\in\LtR$ and $\supp(h) \subseteq [c,d]$ exists, such that the pair $\mathcal{G}(g,a,b)$, $\mathcal{G}(h,a,b)$ satisfy (a), if and only if there is some $A>0$ such that the following hold:
      \begin{equation}\label{eq:exist1gab}\tag{b.i}
	|g(t)| \geq A\ \text{ or }\ |\bd{T}_{-a}g(t)| \geq A \text{ for a.e. } t\in [c,c+a[,
      \end{equation}
      \begin{equation}\label{eq:exist2gab}\tag{b.ii}
	|\bd{T}_{-a}g| \geq A \text{ a.e. on } \supp(\bd{T}_{-b^{-1}}g)\cap I^-_{0,1}
      \end{equation}
      and
      \begin{equation}\label{eq:exist3gab}\tag{b.iii}
	|\bd{T}_{a}g| \geq A \text{ a.e. on } \supp(\bd{T}_{b^{-1}}g)\cap I^+_{0,1}.
      \end{equation}
      \end{itemize}
    \end{Cor}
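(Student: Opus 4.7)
The plan is to derive both parts of the corollary from the duality condition \eqref{eq:roshe}, exploiting the fact that the assumption $b < 2/(d-c+a)$ combined with $a \in [\tfrac{d-c}{2}, d-c[$ forces $b^{-1} > a$ and $2b^{-1} > d-c$. Under these inequalities, at most one term in the infinite sum appearing in \eqref{eq:roshe} is non-zero at almost every point, for each $k \in \ZZ$, which reduces the duality identities to pointwise equations whose support is easy to locate.

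For part (a), I would first observe that for $|k| \geq 2$ each summand $\bd{T}_{na} h \cdot \overline{\bd{T}_{kb^{-1}+na}g}$ has empty support, because $|k|b^{-1} > d-c$. For $k=0$, the product is supported in $[c+na,d+na]$, and the hypothesis $2a \geq d-c$ guarantees that only consecutive translates overlap; restricting to $x\in[c,c+a)$ leaves only $n\in\{-1,0\}$, producing equation (a.i). For $k=\pm 1$, the support of each term has length $d-c-b^{-1} < a$, so a single $n$ contributes at each $x$; equating the resulting pointwise expression to zero on $I^{\mp}_{0,1}$ yields (a.ii) and (a.iii).

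For the $(\Rightarrow)$ direction of part (b), I would use Proposition~\ref{pro:diagsbound} applied to $\mathcal{G}(h,a,b)$: the Bessel bound $B$ yields $\sum_n |\bd{T}_{na}h|^2\leq Bb$ a.e., and the non-overlap argument above forces $|h(y)|,|h(y+a)|\leq\sqrt{Bb}$ for $y\in[c,c+a)$. Combined with (a.i), this produces $|g(y)|+|g(y+a)|\geq \sqrt{b/B}$, which is (b.i). When (a.ii) forces $h(y)=0$, (a.i) gives $|g(y+a)|\geq\sqrt{b/B}$, which is (b.ii), and symmetrically (a.iii) implies (b.iii).

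For $(\Leftarrow)$ I would build $h$ supported on $[c,d]$ by a pointwise case analysis on $y\in[c,c+a)$, setting $(h(y),h(y+a))$ to solve (a.i) under the constraints imposed by (a.ii), (a.iii) and the support of $h$. The structural claim that makes this possible, and the step I expect to be the main obstacle, is that at each $y$ at most one of the values $h(y),h(y+a)$ is forced to be zero: (a.iii) cannot force $h(y)=0$ and (a.ii) cannot force $h(y+a)=0$ (both consequences of $b^{-1}>a$), and the two remaining forcings are mutually exclusive, because simultaneous enforcement would require $g(y+b^{-1})$ and $g(y+a-b^{-1})$ both non-zero, but the inequality $2b^{-1}-a > d-c$ places these points farther apart than the diameter of $\supp(g)$. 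The unconstrained value is then determined from (a.i), and (b.i)--(b.iii) supply the lower bounds on $|g|$ needed to keep $|h|\leq b/A$. Since $h\in L^\infty$ with compact support lies in $W(L^\infty,\ell^1)$, $\mathcal{G}(h,a,b)$ is Bessel and part (a) then certifies duality.
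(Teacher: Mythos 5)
Your proposal is correct and follows essentially the same route as the paper: the paper obtains this corollary by specializing Corollary \ref{thm:shsupp} (replace $g_n$ by $\bd{T}_{na}g$ and $b_n$ by $b$), and the proof of that result uses exactly your ingredients --- the duality conditions collapsed to (a.i)--(a.iii) by support disjointness, Proposition \ref{pro:diagsbound} to bound $|h|$ for the necessity direction, and an explicit pointwise construction $h=b/\overline{g}$ on the admissible set for sufficiency. Your explicit check that the vanishing constraints coming from (a.ii) and (a.iii) can never conflict (via $2b^{-1}-a>d-c$) is a detail the paper leaves implicit in its construction, but the underlying argument is the same.
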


    Note that any real, continuous $g$ with $0<g(x)<1$ for all $x\in (c,d)$ satisfies Corollary \ref{thm:shsuppgab}(b) for all $a<d-c$. Furthermore, under the assumptions $0<g(x)<1$ for all $x\in (c,d)$
    any $h\in\LtR$ with $\supp(h)\subseteq [c,d]$, satisfying  Corollary \ref{thm:shsuppgab}(a) must have its essential support contained in $[d-b^{-1},c+b^{-1}]$. The result above, the restriction of Corollary \ref{thm:shsupp} to the regular Gabor case, is little more than a reduction of the duality condition \eqref{eq:roshe} to systems $\mathcal{G}(g,a,b)$ with $\supp(g)\subseteq [c,d]$ and $b \in ]0,\frac{2}{d-c+a}[$. 
    We see that pairs of dual frames with small support can be found if the painless case conditions are almost fulfilled.
    
    A more general result, improving the support condition in Theorem \ref{thm:chkiki}, can be found in \cite{chkiki12}. It cannot, however, easily be generalized to nonstationary Gabor frames.    

%%% ---------------------------------------------------------------------------------------------------------------
%%% -----------------------------------------Nonstationary---------------------------------------------------------    
%%% ---------------------------------------------------------------------------------------------------------------

  \section{Nonstationary Gabor frames}\label{sec:nsgrap}
  
  We now generalize the notation used in Section \ref{sec:regcas} to the nonstationary setting and state our results in the general case. Since the modulation parameters $b_n$ need not be equal anymore, we will work with
  \begin{equation}
    \begin{split}
    B^+_{n,k} &:= \sum_{j=0}^{k-1} b^{-1}_{n+j}\ \text{and }\\ 
    B^-_{n,k} &:= \sum_{j=0}^{k-1} b^{-1}_{n-j},\ \forall~ n\in\ZZ, k\in\NN.
    \end{split}
  \end{equation}
  Then, for the nonstationary Gabor system $\mathcal{G}(\bd{g},\bd{b})$, with $\supp (g_n) = [c_n,d_n]$ for all $n\in\ZZ$, we set
  \begin{equation}\label{eq:suppDU}
    \begin{split}
    I_{n,0} &= [c_n,d_n],\\ 
    I^+_{n,k} &= [c_{n-k+1}+B^-_{n,k},d_n],\\ 
    I^-_{n,k} & = [c_n,d_{n+k-1}-B^+_{n,k}]
    \end{split}
  \end{equation}
  for $n,k$ as before. Note that for $g_n = \bd{T}_{na}g$ and $b_n = b$ for all $n\in\ZZ$, these sets coincide with those in the previous section.
  
  As before, the notational conventions above will be helpful in describing the structure inherent to the Walnut-like representation of inverse nonstationary Gabor frame operators.    
  The conditions on $\mathcal{G}(\bd{g},\bd{b})$ in Theorem \ref{thm:mainres} below imply $I^\pm_{n,k+1}\subseteq I^\pm_{n,k}\subseteq I_{n,0}$ and $I^+_{n,1} \cap I^-_{n,1} = \emptyset$ for all $n\in\ZZ,k\in\NN$. Some intuition can be gained from likening the NSG frame operator and its inverse to a sparse, infinitesimal matrix with a structured set on non-zero side-diagonals that are, in turn, non-zero only on specific intervals. For an illustration, see Figure \ref{fig:nsg_frameops}.

    \begin{figure}[t!]
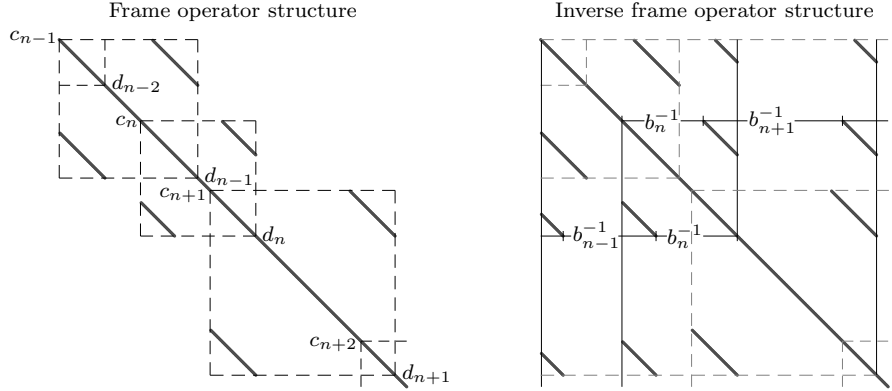

  \begin{center}
\providelength{\AxesLineWidth}       \setlength{\AxesLineWidth}{0.3pt}
\providelength{\plotwidth}           \setlength{\plotwidth}{8cm} % width of the axes only
\providelength{\LineWidth}           \setlength{\LineWidth}{0.4pt}
\providelength{\MarkerSize}          \setlength{\MarkerSize}{1.2pt}
\newrgbcolor{GridColor}{0.8 0.8 0.8}

% Begin Figure:-------------------------------------------
\psset{xunit=0.0012\plotwidth,yunit=0.0012\plotwidth}
%\psset{xunit=0.002079\plotwidth,yunit=0.002079\plotwidth}
\vspace{.3cm}\hspace{17pt}
% [inline block 1: 2 envs, 51013 chars -> data_tex | \begin{pspicture}(-4.433180,-2.216590)(483.216590,500.949309) ...]
%

    \caption{Section of a NSG frame operator and its inverse in the setting of Theorem \ref{thm:mainres} (schematic illustration). \emph{Left}: The weights correspond to side-diagonal entries of a matrix, with $\omega_0$ the main diagonal and $\omega_{n,\pm 1}$ located on side-diagonal $\pm b_n^{-1}$. Grey diagonal lines indicate non-zero entries in the side-diagonals/weights and we see that at most $3$ entries in each row are non-zero. Dashed lines indicate the support of the individual window functions. \emph{Right}: The inverse frame operator additionally possesses an irregularly spaced set of weights $\omega_{n,k}$ located on the side-diagonals $\sgn(k)B^{\sgn(k)}_{n,|k|}$, i.e. dependent on the non-uniform frequency steps $b_n$. Their non-zero entries are constrained by the support of the respective windows, indicated by horizontal and vertical lines. The parameter choice leads to shrinking support for weights located further from the main diagonal.}\label{fig:nsg_frameops}
  \end{center}
  \end{figure}

  The following theorem details the structure of the inverse frame operator and the canonical dual frame $(\widetilde{g_{m,n}})_{m,n}$:
  
  \begin{Thm}\label{thm:mainres}
  Let $\mathcal{G}(\bd{g},\bd{b})$ be a nonstationary Gabor frame with $g_n\in\LtR$, $\supp(g_n) = [c_n,d_n]$, $c_n<d_n$, and $b_n \in ]\frac{1}{d_n-c_n},\infty[$ for all $n\in\ZZ$.
  If $\epsilon > 0$ exists such that $d_{n-1} \leq c_{n+1}$ and $b_n^{-1}\geq\max\{\frac{d_n-c_n}{2},c_{n+1}-c_n,d_n-d_{n-1}\}+\epsilon$ for all $n\in\ZZ$, then the inverse frame operator $\bd{S}_{\bd{g},\bd{b}}^{-1} = \bd{S}^{-1}$ has a Walnut-like representation of the form 
  \begin{itemize}
      \item[(i)]  
	\begin{equation}
	  \bd{S}^{-1}f =  \omega_0 f + \sum_{n\in\ZZ}\sum_{k\in\ZZ\setminus\{0\}} \omega_{n,k} \bd{T}_{-\sgn(k)B^{\sgn(k)}_{n,|k|}}f,\ \forall~f\in\LtR
	\end{equation}
	where $\supp(\omega_{n,k}) \subseteq I^{-\sgn(k)}_{n,|k|}$ for all $n\in\ZZ,k\in\ZZ\setminus\{0\}$.
      \item[(ii)] for any fixed $n \in\NN$ the elements $\{\widetilde{g_{m,n}} = \bd{S}^{-1}g_{m,n}\}_{m\in\ZZ}$ of the canonical dual frame satisfy
	\begin{equation}
	    \supp(\widetilde{g_{m,n}}) \subseteq I_{n,0} \cup \bigcup\limits_{k\in\NN} I^-_{n-k,k} \cup I^+_{n+k,k}.% \cup I_X,
	\end{equation}
	
      \item[(iii)] The elements $\widetilde{g_{m,n}},\ m\neq 0$ of the canonical dual frame can be derived from $\widetilde{g_{0,n}}$ by
	\begin{align}
	    \widetilde{g_{m,n}} & = \bd{M}_{mb_n}\Big(\widetilde{g_{0,n}}|_{I^{(0)}_n} + \sum_{k\in\NN} \widetilde{g_{0,n}}|_{I^-_{n-k,k}}\exp(2\pi i mb_nB^+_{n-k,k}) \nonumber \\
	    & \hspace{85pt} + \widetilde{g_{0,n}}|_{I^+_{n+k,k}}\exp(-2\pi i mb_nB^-_{n+k,k})\Big).
	\end{align}
      \item[(iv)] For each $n\in\ZZ$, there exists $k_n\in\NN$ such that $I^\pm_{n,k} = \emptyset$
	      for all $k \geq k_n$. Furthermore, if a constant $C<\infty$ exists, such that $\max_n(d_n-c_n-b_n^{-1}) \leq C$, then $I^\pm_{n,k} = \emptyset$
	      for all $n\in\ZZ$ and $\NN \ni k \geq C/\epsilon$.
  \end{itemize}
  \end{Thm}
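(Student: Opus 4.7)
The plan is to combine the Walnut representation of $\bd{S}$ from Proposition \ref{pro:walnut} with the Neumann series expansion \eqref{eq:neumann} of $\bd{S}^{-1}$, while systematically tracking both translation constants and the supports of the weight functions that appear. As a preparatory step, I would verify the geometric claims about the intervals $I^\pm_{n,k}$ under the hypotheses. The bound $b_n^{-1}\geq (d_n-c_n)/2+\epsilon$ forces $g_n\bd{T}_{kb_n^{-1}}\overline{g_n}\equiv 0$ for all $|k|\geq 2$, so the Walnut representation of $\bd{S}$ collapses to three nonzero diagonals: the main one $\sum_n b_n^{-1}|g_n|^2$ together with two side-diagonals at translations $\pm b_n^{-1}$ whose weights have support in $I^\pm_{n,1}$. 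The conditions $d_{n-1}\leq c_{n+1}$ and $b_n^{-1}\geq \max\{c_{n+1}-c_n,d_n-d_{n-1}\}+\epsilon$ imply the disjointness $I^+_{n,1}\cap I^-_{n,1}=\emptyset$ and, by a direct induction, the strict nesting $I^\pm_{n,k+1}\subset I^\pm_{n,k}$ with the length shrinking by at least $\epsilon$ at each step. This already proves (iv), since the boundedness assumption $\max_n(d_n-c_n-b_n^{-1})\leq C$ then forces $I^\pm_{n,k}=\emptyset$ once $k\geq C/\epsilon$.

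For (i), I would expand $\bd{S}^{-1}=\tfrac{2}{A+B}\sum_{j\geq 0}\bigl(\bd{I}-\tfrac{2\bd{S}}{A+B}\bigr)^j$. The operator $\bd{I}-\tfrac{2\bd{S}}{A+B}$ has a Walnut-like representation with weights supported only on the three diagonals identified above. Iterated composition produces products of translation operators that combine as $\bd{T}_a\bd{T}_b=\bd{T}_{a+b}$; a chain of $k$ forward steps using $b_{n}^{-1},b_{n+1}^{-1},\ldots$ yields exactly the translation $B^+_{n,k}$, and similarly for backward chains. Crucially, the support-shrinking established in (iv) and the disjointness $I^+\cap I^-=\emptyset$ guarantee that any chain mixing forward and backward steps either reduces to a pure chain after cancellation or has weight identically zero on the region where it could contribute. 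Collecting all contributions with translation constant $\sgn(k)B^{\sgn(k)}_{n,|k|}$ and invoking (iv) to ensure that only finitely many weights accumulate pointwise yields the Walnut-like representation with $\supp(\omega_{n,k})\subseteq I^{-\sgn(k)}_{n,|k|}$.

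Parts (ii) and (iii) then follow by applying the representation from (i) to $g_{m,n}=\bd{M}_{mb_n}g_n$ via the standard commutation relation $\bd{T}_x\bd{M}_\nu=e^{-2\pi ix\nu}\bd{M}_\nu\bd{T}_x$. For (ii), the term $\omega_{j,k}\bd{T}_{-\sgn(k)B^{\sgn(k)}_{j,|k|}}g_n$ has support contained in $I^{-\sgn(k)}_{j,|k|}$ translated by $-\sgn(k)B^{\sgn(k)}_{j,|k|}$, which a direct computation with \eqref{eq:suppDU} identifies as $I^-_{j,k}$ if $j=n-k$ or $I^+_{j,|k|}$ if $j=n+|k|$; all other index combinations lead to empty intersections with $\supp(g_n)=I_{n,0}$. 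For (iii), pulling $\bd{M}_{mb_n}$ outside each translate produces a phase $\exp(2\pi i m b_n\sgn(k)B^{\sgn(k)}_{j,|k|})$ multiplying the corresponding piece of $\widetilde{g_{0,n}}$, and the pairwise disjointness of the pieces from (ii) makes this phase assignment a well-defined pointwise identity.

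The principal obstacle is the careful bookkeeping in (i): rearranging the Neumann series into a sum indexed by distinct translation constants, controlling cross-cancellations between weights arising from different chains, and justifying unconditional convergence in the sense of Definition \ref{def:wallike}. The geometric fact proved in (iv), together with Proposition \ref{pro:diagsbound} to bound the weight magnitudes, is what ultimately makes this rearrangement valid, since at each point only finitely many side-diagonals contribute.
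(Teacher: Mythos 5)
Your overall strategy --- the preparatory interval lemma, the Neumann series \eqref{eq:neumann} combined with the three-diagonal Walnut representation of $\bd{S}$, an induction on the powers of $\bd{N}:=\bd{I}-2\bd{S}/(A+B)$ that tracks both the translation constants and the supports of the weights, and the deduction of (ii)--(iv) from (i) --- is exactly the paper's route. Your treatment of (iv) is correct, and (ii)--(iii) are right in outline; the only bookkeeping detail you elide is that for each $k$ \emph{two} families of weights per side meet $[c_n,d_n]$ (indices $n-k$ and $n-k+1$, resp. $n+k$ and $n+k-1$), which are then absorbed via the nesting $I^\pm_{n,k+1}\subseteq I^\pm_{n,k}$, and that the phase assignment in (iii) is consistent across these two families only because $t\mapsto e^{2\pi i mb_n t}$ is $b_n^{-1}$-periodic.

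The one genuine gap is in your justification of the rearrangement in (i). Pointwise finiteness of the contributing side-diagonals together with the bound of Proposition \ref{pro:diagsbound} does not suffice: after reordering, each weight of $\bd{S}^{-1}$ is an \emph{infinite} sum over the Neumann index, $\omega_{n,k}=\sum_{j\geq |k|}\omega_{j,n,k}$, and a $j$-uniform bound on $|\omega_{j,n,k}|$ gives no control of that sum. What is needed, and what the paper uses, is the geometric decay $\|\omega_{j,n,k}\|_\infty\leq C^j$ with $C=\frac{B-A}{B+A}<1$, obtained by applying Lemma \ref{lem:sepeqbnd} to the operator $\bd{N}^{j}$ (whose operator norm is at most $C^j$); this is the point where the local separation of the translation constants, guaranteed by the support structure, actually enters. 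Since the induction shows $\omega_{j,n,k}=0$ for $j<|k|$, one gets $\|\omega_{n,k}\|_\infty\leq C^{|k|}/(1-C)$, and combining this with $\sum_n\|f\mid_{[c_n,d_n]}\|\leq 2\|f\|$ (a consequence of $d_{n-1}\leq c_{n+1}$) yields absolute convergence in operator norm, which is what legitimizes interchanging the sum over $j$ with the sums over $(n,k)$. Without this decay, your argument shows at best that the rearranged expression is formally well defined, not that it converges to $\bd{S}^{-1}$.
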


  Loosely speaking, the above theorem can be read as follows: Whenever a nonstationary Gabor system, comprised of compactly supported window functions with moderate overlap and sufficiently small modulation 
  parameters, constitutes a frame, then 
  \begin{enumerate}
  \item[(i)] the inverse frame operator possesses a Walnut-like representation with compactly supported off-diagonal weight functions
  \item[(ii)] each element of the canonical dual frame is supported on a finite, disjoint union of compact intervals.
  \item[(iii)] the canonical dual frame of $\mathcal{G}(\bd{g},\bd{b})$ is ``almost'' a nonstationary Gabor system with the same modulation parameters. Some phase shifts may occur, though.
  \item[(iv)] for fixed $n\in\ZZ$ only finitely many of the intervals $I^\pm_{n,k}$ are non-empty. If the window sizes behave nicely, there is a uniform bound on the number of non-empty sets, 
	      valid for all $n\in\ZZ$.
  \end{enumerate}
  
  It is imminent from Theorem \ref{thm:mainres}(ii) and (iii), that we can only guarantee the canonical dual system to be a NSG system with the same modulation parameters, if either $I^\pm_{n,k} = \emptyset$ for all 
  $n,k$ or $b_n = b$ for all $n\in\ZZ$. While other constructions are conceivable, e.g. using uniform modulation parameters in a blockwise fashion, separated by a window $g_n$ with $b_n^{-1} \geq d_n-c_n$, they 
  require great care in the choice of both window functions and parameters. More intuitive constructions such as the choice of a uniform undersampling factor, i.e. $b_n^{-1} = \alpha(d_n-c_n)$ for some $\alpha < 1$,
  do not leave the structure of the original system intact.
  
  To recover Theorem \ref{thm:regcas} from Theorem \ref{thm:mainres}, combine (i) and (iv); noting that $B^\pm_{n,k} = kb^{-1}$ for all $n,k$, take $\omega_k = \sum_{n} \omega_{n,k}$ for $k\neq 0$.
  
  A closer look at the intervals $I^\pm_{n,k}$ shows that, under the conditions of Theorem \ref{thm:mainres}, $I^\pm_{n,k+1} \subsetneq I^\pm_{n,k}$ is guaranteed. That is, the sets $I^\pm_{n,k}$ are 
  strictly shrinking for $n$ fixed and $k$ increasing. Lemma \ref{lem:intervals} in Section \ref{sec:prodis} takes a closer look at how these sets are intertwined.

  As in the regular Gabor case, it is reasonable to ask whether the weights of the inverse frame operator are supported on the same set as those of the original frame operator. 

  \begin{Cor}\label{cor:samesupp}
      Let $\mathcal{G}(\bd{g},\bd{b})$ be as in Theorem \ref{thm:mainres}, with $b_n \in \left]0,\frac{b_{n-1}}{b_{n-1}(d_n-c_{n-1})-1}\right[$ for all $n\in\ZZ$, then 
	    \begin{equation*}
	  \bd{S}^{-1}f = \omega_0f + \sum_{n\in\ZZ}\sum_{k\in\{-1,1\}} \omega_{n,k} \bd{T}_{-\sgn(k)B^{\sgn(k)}_{n,|k|}}f,
      \end{equation*}
      and $\supp(\omega_{n,\pm 1}) \subseteq I^{\mp}_{n,1}$. The same support conditions hold for $\bd{S}$, albeit with different weights.
  \end{Cor}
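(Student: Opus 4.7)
The plan is to deduce the corollary from Theorem \ref{thm:mainres}(i) by showing that the additional hypothesis on $b_n$ forces $I^\pm_{n,k}=\emptyset$ whenever $|k|\geq 2$, so that only the $k=\pm 1$ terms survive in the Walnut-like representation of $\bd{S}^{-1}$. The key step is a clean rearrangement of the displayed upper bound on $b_n$; the remaining work is bookkeeping with the interval definitions in \eqref{eq:suppDU}.

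Specifically, starting from $b_n < b_{n-1}/(b_{n-1}(d_n-c_{n-1})-1)$ (if the denominator is non-positive, the inequality below is trivially true since then $b_{n-1}^{-1}\geq d_n-c_{n-1}$), one clears denominators and rearranges to obtain
\begin{equation*}
b_n^{-1} + b_{n-1}^{-1} > d_n - c_{n-1}.
\end{equation*}
Since $I^+_{n,2} = [c_{n-1} + b_n^{-1} + b_{n-1}^{-1},\, d_n]$, this immediately gives $I^+_{n,2} = \emptyset$. Applying the same inequality with $n$ replaced by $n+1$ yields $b_n^{-1} + b_{n+1}^{-1} > d_{n+1} - c_n$, which forces $I^-_{n,2} = [c_n,\, d_{n+1} - b_n^{-1} - b_{n+1}^{-1}] = \emptyset$.

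Next I invoke the strict shrinking property $I^\pm_{n,k+1} \subsetneq I^\pm_{n,k}$ noted directly after Theorem \ref{thm:mainres}, which follows from the standing hypotheses $b_n^{-1}\geq c_{n+1}-c_n + \epsilon$ and $b_n^{-1}\geq d_n - d_{n-1} + \epsilon$. Combined with emptiness of $I^\pm_{n,2}$, this yields $I^\pm_{n,k} = \emptyset$ for every $|k|\geq 2$ and every $n\in\ZZ$, which truncates the sum in Theorem \ref{thm:mainres}(i) to exactly the form claimed, with the asserted support conditions $\supp(\omega_{n,\pm 1})\subseteq I^{\mp}_{n,1}$.

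For the analogous statement about $\bd{S}$, I would simply appeal to the Walnut representation \eqref{eq:walnut}: the weight attached to $\bd{T}_{kb_n^{-1}}$ is $b_n^{-1} g_n \bd{T}_{kb_n^{-1}}\overline{g_n}$, whose support lies in $[c_n,d_n]\cap[c_n - kb_n^{-1},\, d_n - kb_n^{-1}]$. The hypothesis $b_n^{-1} \geq (d_n-c_n)/2 + \epsilon$ inherited from Theorem \ref{thm:mainres} already implies $2b_n^{-1} > d_n - c_n$, so these intersections are empty for $|k|\geq 2$ even without the additional assumption on $b_n$, and the two surviving weights at $k=\pm 1$ have exactly the supports $I^{\mp}_{n,1}$. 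The main obstacle, such as it is, is just the algebraic manipulation rearranging the bound on $b_n$ into an additive condition on $b_n^{-1}+b_{n-1}^{-1}$; once that is done, everything reduces to previously established structure.
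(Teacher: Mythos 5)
Your proposal is correct and follows the same route as the paper, which likewise deduces the statement from Theorem \ref{thm:mainres}(i) by checking that $I^\pm_{n,k}=\emptyset$ for all $k>1$ (your rearrangement of the bound on $b_n$ into $b_n^{-1}+b_{n-1}^{-1}>d_n-c_{n-1}$, together with the nesting $I^\pm_{n,k+1}\subseteq I^\pm_{n,k}$, is exactly the ``simple check'' the paper leaves to the reader), and handles $\bd{S}$ via the Walnut representation \eqref{eq:walnut} just as the paper does. One trivial slip: the support of $b_n^{-1}g_n\bd{T}_{kb_n^{-1}}\overline{g_n}$ is $[c_n,d_n]\cap([c_n,d_n]+kb_n^{-1})$, not $[c_n,d_n]\cap([c_n,d_n]-kb_n^{-1})$, but this only amounts to the relabeling $k\mapsto-k$ between the raw Walnut sum and the corollary's indexing and does not affect the conclusion.
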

  \begin{proof}
	Apply Theorem \ref{thm:mainres}(i) and simply check that $I^+_{n,k} = I^-_{n,k} = \emptyset$, for all $n\in\ZZ,k>1$ to show the statement for $\bd{S}^{-1}$. For $\bd{S}$, apply the Walnut representation \eqref{eq:walnut} and check the conditions of Theorem \ref{thm:mainres}.
  \end{proof}
  
  So far, we have investigated the structure of inverse NSG frame operators and the canonical dual frames of NSG systems. We have seen that only few particular choices of $\mathcal{G}(\bd{g},\bd{b})$ yield a canonical dual frame of the form $\mathcal{G}(\tilde{\bd{g}},\bd{b})$. Yet, this does not exclude the existence of a dual system $\mathcal{G}(\bd{h},\bd{b})$ per se. To further illuminate this problem, 
  we will deduce a sufficient, and in many standard cases necessary, condition for duality of two NSG systems $\mathcal{G}(\bd{g},\bd{b})$ and $\mathcal{G}(\bd{h},\bd{b})$ to constitute dual frames. As an illustrative example, we will apply the result in the setting of Corollary \ref{cor:samesupp}.
  
  \subsection{Towards a duality condition}\label{ssec:duality}
  
  The Walnut representation \eqref{eq:walnut} is an efficient way to describe the action of a NSG frame-type operator. However, to determine duality of two NSG systems, it is beneficial to rearrange the summations ordered by the appearing translate of $f$. More precisely, define for any sequence $\bd{b} = (b_n\in\RR^+)_n$ the countable set $E_{\bd{b}}$ by
  \begin{equation}
    E_{\bd{b}} = \{ x\in\RR~:~\exists~(m,n)\in\ZZ^2 \text{ s.t. } x = mb^{-1}_n \}.
  \end{equation}
  
  Furthermore, to prevent pathologies, we introduce the following notion of ``nice'' nonstationary Gabor systems.
  \begin{Def}\label{def:wellbhvd}
    We call a nonstationary Gabor system $\mathcal{G}(\bd{g},\bd{b})$ \emph{well-behaved}, if either of the following holds:
    \begin{enumerate}
    \item[(i)] $E_{\bd{b}}$ is free of accumulation points.%is a $\delta$-separated set, i.e. $|x-y|\geq \delta$ for all $x,y\in E_{\bd{b}}$ with $x\neq y$.
    \item[(ii)] For all $n\in\ZZ$, $g_n$ is compactly supported on some interval $[c_n,d_n]$ and $O_n := \{ l\in\ZZ ~:~ c_l < d_n \text{ and } d_l > c_n \}$ is finite.
    \end{enumerate}
  \end{Def}
  
  The flexibility gained by the way a NSG system is defined allows the construction of a multitude of pathological cases that are generally not interesting for practical purposes. Note that the functions $g_n$ and $h_n$ are usually desired to be well concentrated in time and frequency. Further, they should be evenly distributed over time. Consequently, only finitely many compactly supported windows overlapping is a rather weak restriction. On the other hand $\{b_n^{-1}~:~n\in\ZZ\}$ being $\delta$-separated, i.e. either $b_n^{-1} = b_l^{-1}$ or $|b_n^{-1}-b_l^{-1}|\geq \delta$ for all $n,l\in\ZZ$ is enough to guarantee $E_{\bd{b}}$ being free of accumulation points. 
  
  We can now formulate an alternative version of the Walnut representation \eqref{eq:walnut}, valid on a dense subspace of $\LtR$.
  
  \begin{Cor}\label{cor:walnutalt}
    Let $\mathcal{G}(\bd{g},\bd{b})$ and $\mathcal{G}(\bd{h},\bd{b})$ be well-behaved nonstationary Gabor Bessel sequences with $b_n\in\RR^+$ and $g_n,h_n\in\LtR$, for all $n\in\ZZ$. Then, for all $f\in\LtR$ with compact support,% the associated frame-type operator $\bd{S}_{\bd{g},\bd{h},\bd{b}}$,% admits a Walnut-like representation of the form
    \begin{equation}\label{eq:walalt}
      \bd{S}_{\bd{g},\bd{h},\bd{b}}f = \sum_{x\in E_{\bd{b}}} \omega_x \bd{T}_x f,
    \end{equation}
    with 
    \begin{equation}\label{eq:walaltweights}
      \omega_0 = \sum_{n\in\ZZ} b_n^{-1} h_n\overline{g_n}\text{ and } \omega_x = \sum_{\substack{(m,n)\in\ZZ^2 \\ mb^{-1}_n = x}} b_n^{-1} h_n \bd{T}_{x}\overline{g_n}\text{ for } x\neq 0.
    \end{equation}
    Moreover, the sum in \eqref{eq:walalt} is absolutely convergent. Consequently, the extension to $\LtR$ of the bounded, linear operator defined by the right-hand side of \eqref{eq:walalt} equals $\bd{S}_{\bd{g},\bd{h},\bd{b}}$.
  \end{Cor}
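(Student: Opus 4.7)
The statement is a regrouping of the Walnut representation from Proposition \ref{pro:walnut}(ii): the double sum indexed by $(n,k) \in \ZZ^2$ is reindexed by the common translation constant $x = kb_n^{-1} \in E_{\bd{b}}$. Since both systems are Bessel, Proposition \ref{pro:walnut}(ii) yields
\[
  \bd{S}_{\bd{g},\bd{h},\bd{b}} f = \sum_{n,k \in \ZZ} b_n^{-1}\, h_n\, \bd{T}_{kb_n^{-1}} \overline{g_n}\, \bd{T}_{kb_n^{-1}} f.
\]
Collecting all $(n,k)$ with $kb_n^{-1}=x$ and pulling out the common translate $\bd{T}_x f$ produces the formal candidate $\sum_{x\in E_{\bd{b}}} \omega_x \bd{T}_x f$ with $\omega_x$ as in \eqref{eq:walaltweights}. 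Proposition \ref{pro:diagsbound}, applied at the translated argument, gives $\|\omega_x\|_\infty \leq B$ for every $x \in E_{\bd{b}}$, where $B$ is a joint Bessel bound, so each $\omega_x$ is a well-defined $L^\infty$-function.

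The heart of the proof is to verify absolute convergence of $\sum_{x\in E_{\bd{b}}} \|\omega_x \bd{T}_x f\|_2$ for compactly supported $f$, separately for the two sub-cases of Definition \ref{def:wellbhvd}. In sub-case (ii), if $\supp(f) \subseteq [-N,N]$, finiteness of the overlap sets $O_n$ restricts the $n$'s for which $h_n \bd{T}_{kb_n^{-1}}\overline{g_n}\bd{T}_{kb_n^{-1}}f$ can be non-zero to a finite set, and for each such $n$ the compactness of $\supp(g_n)$ combined with that of $\supp(f)$ forces $|k|$ into a bounded range. Hence only finitely many $x\in E_{\bd{b}}$ contribute, and absolute convergence is immediate.

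Sub-case (i) is the main obstacle, since the windows need not be compactly supported and the sum defining $\omega_x$ may be genuinely infinite. Here, discreteness of $E_{\bd{b}}$ ensures that, for each fixed $t \in \RR$, the set $\{x \in E_{\bd{b}} : t-x \in \supp(f)\} = E_{\bd{b}} \cap (t-\supp(f))$ is finite; this allows one to interchange the pointwise sum with the $L^2$-norm. Combined with the pointwise estimate from Proposition \ref{pro:diagsbound}, this should yield a summable bound. The naive estimate $\|\omega_x \bd{T}_x f\|_2 \leq B\|f\|_2$ is insufficient and must be sharpened by localizing $\omega_x$ to $x+\supp(f)$ and exploiting the lack of accumulation in $E_{\bd{b}}$.

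Once absolute $L^2$-convergence is established on the dense subspace of compactly supported $f$, the rearranged sum agrees with $\bd{S}_{\bd{g},\bd{h},\bd{b}}f$ on that subspace. Since $\bd{S}_{\bd{g},\bd{h},\bd{b}}$ is bounded on $\LtR$ by the joint Bessel hypothesis, the bounded linear operator defined by the right-hand side of \eqref{eq:walalt} extends uniquely to all of $\LtR$ and coincides with $\bd{S}_{\bd{g},\bd{h},\bd{b}}$.
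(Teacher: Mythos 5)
Your overall strategy --- regroup the Walnut representation of Proposition \ref{pro:walnut}(ii) by the common translation constant $x=kb_n^{-1}$, bound the weights via Proposition \ref{pro:diagsbound}, and split according to the two cases of Definition \ref{def:wellbhvd} --- is the paper's. But the step you yourself call the heart of the proof is left unresolved, and in the form you state it, it aims at the wrong target. You set out to prove $\sum_{x\in E_{\bd{b}}}\|\omega_x\bd{T}_xf\|_2<\infty$, i.e.\ summability of the norms. Discreteness of $E_{\bd{b}}$ does not give this: it only guarantees that for a.e.\ fixed $t$ the set $E_{\bd{b}}\cap(t-\supp(f))$ is finite, so that at each point only finitely many summands are non-zero. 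Since infinitely many $x\in E_{\bd{b}}$ contribute in general (already for a regular Gabor frame beyond the painless case, where $E_{\bd{b}}=b^{-1}\ZZ$), and the supports $x+\supp(f)$ merely overlap with bounded multiplicity, localizing $\omega_x$ to $x+\supp(f)$ yields at best $\ell^2$-type control of the norms $\|\omega_x\bd{T}_xf\|_2$, not the $\ell^1$ bound you would need; there is no reason for these norms to be summable for a general pair of Bessel sequences. The convergence the corollary asserts (and the paper proves) is the pointwise a.e.\ one: for a.e.\ $t$, $\sum_{x\in E_{\bd{b}}}|\omega_x(t)|\,|f(t-x)|\le B\sum_{x\in E_{\bd{b}}\cap(t-\supp(f))}|f(t-x)|<\infty$, a finite sum, after which the identity with $\bd{S}_{\bd{g},\bd{h},\bd{b}}f$ follows by regrouping the unconditionally convergent sum \eqref{eq:walnut}. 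Your phrases ``this should yield a summable bound'' and ``must be sharpened'' stand in for an argument that, as formulated, would fail.

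A second, smaller inaccuracy occurs in case (ii): finiteness of each overlap set $O_n$ does not restrict the windows meeting $\supp(f)$ to a finite set --- infinitely many pairwise non-overlapping windows can accumulate inside a compact interval --- so the conclusion that only finitely many $x\in E_{\bd{b}}$ contribute is not justified. What finiteness of $O_n$ does give, and what the paper uses, is that at almost every $t$ only the finitely many $n$ with $t$ in the interior of $\supp(g_n)$ contribute; the sum is therefore pointwise finite almost everywhere, which again suffices for the pointwise regrouping but not for your stronger norm-summability claim.
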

  \begin{proof}
    By Proposition \ref{pro:diagsbound}, $\sum_{n\in\ZZ} b^{-1}_n |h_n\bd{T}_x \overline{g_n}| \leq B$ almost everywhere, for any $x\in\RR$. Now let $I$ be any finite interval such that $\supp(f)+\supp(f) \subseteq I$. If $\mathcal{G}(\bd{g},\bd{b})$, $\mathcal{G}(\bd{h},\bd{b})$ are well-behaved in the sense of Definition \ref{def:wellbhvd}(i), then $E_{\bd{b}}\cap I$ is a finite set and
    \begin{align}\label{eq:finitesum}
      \lefteqn{\Big| \sum_{x\in E_{\bd{b}}} \sum_{\substack{ (n,k)\in\ZZ^2 \\ x = kb_n^{-1}}} b_n^{-1}(h_n\bd{T}_x\overline{g_n})\bd{T}_x f \Big|} \nonumber \\
      & \leq \sum_{x\in E_{\bd{b}}\cap I} \sum_{\substack{ (n,k)\in\ZZ^2 \\ x = kb_n^{-1}}} b_n^{-1}|h_n\bd{T}_x\overline{g_n}||\bd{T}_x f| \nonumber \\
      & \leq B \sum_{x\in E_{\bd{b}}\cap I} |\bd{T}_x f| < \infty\ \text{a.e. on } I,
    \end{align}
    with absolute convergence. If on the other hand, $\mathcal{G}(\bd{g},\bd{b})$, $\mathcal{G}(\bd{h},\bd{b})$ are well-behaved in the sense of Definition \ref{def:wellbhvd}(ii), then the sum over $E_{\bd{b}}$ is locally finite. Thus, by the Walnut representation \eqref{eq:walnut} of $\bd{S}_{\bd{g},\bd{h},\bd{b}}$:
    \begin{equation}
      \sum_{x\in E_{\bd{b}}} \omega_x \bd{T}_x f = \sum_{n,k\in\ZZ} b_n^{-1} (h_n \bd{T}_{kb^{-1}_n}\overline{g_n}) \bd{T}_{kb^{-1}_n} f = \bd{S}_{\bd{g},\bd{h},\bd{b}} f,\ \forall~f\in\LtR.
    \end{equation}
    Since $\sum_{x\in E_{\bd{b}}} \omega_x \bd{T}_x = \bd{S}_{\bd{g},\bd{h},\bd{b}}$ on a dense subspace of $\LtR$, the extension of $\sum_{x\in E_{\bd{b}}} \omega_x \bd{T}_x$ to $\LtR$ equals
    $\bd{S}_{\bd{g},\bd{h},\bd{b}}$.
  \end{proof}
  
  It is easy to see that $\omega_0 \equiv 1$ and $\omega_x \equiv 0$ for $x\neq 0$ is a sufficient condition for $\bd{S}_{\bd{g},\bd{h},\bd{b}}f = f$ and thus for $\mathcal{G}(\bd{g},\bd{b})$ and $\mathcal{G}(\bd{h},\bd{b})$ to be dual frames. For well-behaved systems $\mathcal{G}(\bd{g},\bd{b})$ and $\mathcal{G}(\bd{h},\bd{b})$, it can be shown to be necessary as well. 

  \begin{Thm}\label{thm:dualitycond} 
    Let $\mathcal{G}(\bd{g},\bd{b})$, $\mathcal{G}(\bd{h},\bd{b})$ be nonstationary Gabor Bessel sequences with $g_n, h_n \in\LtR$, $b_n\in\RR^+$. Then 
    \begin{equation}\label{eq:dualitycond}
      \sum_{n\in\ZZ} b_n^{-1} h_n\overline{g_n} \equiv 1 \text{ and } \sum_{\substack{(m,n)\in\ZZ^2 \\ mb^{-1}_n = x}} b_n^{-1} h_n \bd{T}_{x}\overline{g_n} \equiv 0 \text{ for } x\neq 0
    \end{equation}
    implies duality in the sense that 
    \begin{equation}\label{eq:classicduality}
      \langle f_1,f_2 \rangle = \sum_{m,n\in\ZZ} \langle f_1,g_{m,n} \rangle \langle h_{m,n},f_2\rangle \text{ for all } f_1,f_2\in\LtR.
    \end{equation}
    If furthermore both $\mathcal{G}(\bd{g},\bd{b})$ and $\mathcal{G}(\bd{h},\bd{b})$ are well-behaved, the converse holds as well.
  \end{Thm}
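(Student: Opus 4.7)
The identity \eqref{eq:classicduality} is equivalent to the operator equation $\bd{S}_{\bd{g},\bd{h},\bd{b}} = \bd{I}$, since by definition $\bd{S}_{\bd{g},\bd{h},\bd{b}} = \bd{D}_{\mathcal{G}(\bd{h},\bd{b})}\bd{C}_{\mathcal{G}(\bd{g},\bd{b})}$ satisfies $\langle \bd{S}_{\bd{g},\bd{h},\bd{b}} f_1, f_2\rangle = \sum_{m,n}\langle f_1, g_{m,n}\rangle\langle h_{m,n}, f_2\rangle$, and this scalar double sum converges absolutely by Cauchy--Schwarz together with the joint Bessel bound $B$: $\sum_{m,n}|\langle f_1, g_{m,n}\rangle\langle h_{m,n}, f_2\rangle| \leq B\|f_1\|\|f_2\|$. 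Both directions then amount to comparing a Walnut-style expansion of $\bd{S}_{\bd{g},\bd{h},\bd{b}}$ with the trivial one for $\bd{I}$.

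For the sufficient direction, I would start from the Walnut representation of Proposition \ref{pro:walnut}(ii),
\begin{equation*}
\bd{S}_{\bd{g},\bd{h},\bd{b}} f = \sum_{n,k\in\ZZ} b_n^{-1}\, h_n\,\bd{T}_{kb_n^{-1}}\overline{g_n}\,\bd{T}_{kb_n^{-1}} f,
\end{equation*}
and regroup the $(n,k)$-sum by the shared translation amount $x = kb_n^{-1}$. For each fixed $x$, Proposition \ref{pro:diagsbound} ensures that $\omega_x := \sum_{(m,n):\,mb_n^{-1}=x} b_n^{-1} h_n\bd{T}_x\overline{g_n}$ converges absolutely to a weight with $\|\omega_x\|_\infty \leq B$. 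Assuming \eqref{eq:dualitycond}, we have $\omega_0 \equiv 1$ and $\omega_x \equiv 0$ for $x \neq 0$, so the regrouped sum collapses to $f$. To handle the regrouping rigorously without a discreteness assumption on $E_{\bd{b}}$, I would pair with compactly supported $\phi \in \LtR$ and work at the scalar level, where the absolute convergence above allows a Fubini-type reordering; density and boundedness then propagate the identity $\bd{S}_{\bd{g},\bd{h},\bd{b}} f = f$ to all of $\LtR$.

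For the converse under well-behavedness, duality forces $\bd{W} := \bd{S}_{\bd{g},\bd{h},\bd{b}} - \bd{I} = 0$, so $\|\bd{W}\|_{op} = 0$. Corollary \ref{cor:walnutalt}, whose hypotheses are exactly the well-behavedness of both systems, supplies a Walnut-like representation $\bd{S}_{\bd{g},\bd{h},\bd{b}} f = \sum_{x\in E_{\bd{b}}} \omega_x \bd{T}_x f$ with weights as in \eqref{eq:walaltweights}; subtracting the trivial Walnut-like representation of $\bd{I}$ endows $\bd{W}$ with a Walnut-like representation whose weights are $\omega_0 - 1$ at $x = 0$ and $\omega_x$ at $x \in E_{\bd{b}}\setminus\{0\}$. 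Lemma \ref{lem:sepeqbnd} applied with $C = 0$ then forces $\omega_0 \equiv 1$ and $\omega_x \equiv 0$ for $x \neq 0$, which is precisely \eqref{eq:dualitycond}. Verifying the lemma's local-discreteness hypothesis is immediate under Definition \ref{def:wellbhvd}(i) from $E_{\bd{b}}$ being free of accumulation points; under (ii) it follows from the observation that any point $t$ can lie in only finitely many $\supp(h_n)$ (by the finiteness of the overlap sets $O_n$ for $\mathcal{G}(\bd{h},\bd{b})$), and for each such $n$ only finitely many $k$ produce $\bd{T}_{kb_n^{-1}}\overline{g_n}$ nonzero near $t$, so the relevant set of translations is locally finite around each $x \in E_{\bd{b}}$.

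The principal obstacle I anticipate is the rearrangement step in the sufficiency argument: grouping the Walnut double sum over $\ZZ^2$ by translation amount is not automatic when $E_{\bd{b}}$ can accumulate, and performing it directly at the operator level would require uniform control we do not have. Routing through the scalar absolutely convergent sum $\sum_{m,n}|\langle f, g_{m,n}\rangle\langle h_{m,n},\phi\rangle|$ with compactly supported test functions circumvents this, at the price of having to extend by density afterwards. A secondary subtlety is confirming that well-behavedness in the sense of Definition \ref{def:wellbhvd}(ii) really does deliver the local-discreteness needed by Lemma \ref{lem:sepeqbnd}, as compactly supported windows may still accumulate even when each single window overlaps only finitely many others.
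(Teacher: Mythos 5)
Your proposal follows the paper's route almost exactly: both directions reduce \eqref{eq:classicduality} to the operator identity $\bd{S}_{\bd{g},\bd{h},\bd{b}}=\bd{I}$ and then compare the translation-grouped Walnut representation of Corollary \ref{cor:walnutalt} with the trivial representation of the identity. For the converse, the paper constructs explicit test functions $f_1=\chi_{B_\delta(2l\delta)-x}$ and $f_2=\overline{\omega_x}\mid_{B_\delta(2l\delta)}$ to force $\omega_x\equiv 0$ for $x\neq 0$; your appeal to Lemma \ref{lem:sepeqbnd} with $C=0$ is the same argument packaged as a lemma. Your worry about Definition \ref{def:wellbhvd}(ii) is well placed: finiteness of each $O_n$ does not prevent infinitely many windows from meeting a fixed compact interval, so your claimed local finiteness of the relevant translation set need not hold globally. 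The paper resolves this by localizing to a single window's neighbours, replacing $E_{\bd{b}}$ by $E_{\bd{b},n}=\{mb_l^{-1}\,:\,m\in\ZZ,\ l\in O_n\}$, a finite union of lattices and hence free of accumulation points; you would need that localization rather than the global statement you sketch.

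The one step where your justification does not do what you claim is the regrouping in the sufficiency direction. The absolutely convergent scalar sum you invoke, $\sum_{m,n}|\langle f_1,g_{m,n}\rangle\,\langle h_{m,n},f_2\rangle|\leq B\|f_1\|\|f_2\|$, is indexed by modulations $(m,n)$, whereas the regrouping you need is of the translation-indexed Walnut sum over $(n,k)$ by the shared translation amount $x=kb_n^{-1}$. Per window the passage from the $m$-sum to the $k$-sum is a Parseval identity, which preserves the value but not absolute summability; an iterated sum $\sum_n\bigl(\sum_k c_{n,k}\bigr)$ with finite inner sums can be absolutely convergent in $n$ while a regrouping that cuts across the $n$-blocks diverges (take $c_{n,1}=1$, $c_{n,2}=-1$). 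Proposition \ref{pro:diagsbound} controls each fixed-$x$ group but gives only $B\|f\|\|\phi\|\cdot\#\bigl(E_{\bd{b}}\cap(\supp\phi-\supp f)\bigr)$ as a bound on the total, which is useless when $E_{\bd{b}}$ accumulates. So your scalar Fubini argument does not dispense with a discreteness assumption on $E_{\bd{b}}$. To be fair, the paper's own proof of this direction simply invokes Corollary \ref{cor:walnutalt}, whose hypotheses include well-behavedness even though the sufficiency statement does not, so it carries the same unresolved issue; but you should either assume well-behavedness here as well or supply a genuine proof of unconditional convergence of the $(n,k)$-sum, which is the actual missing ingredient.
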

  \begin{proof}
    Using the Bessel property of $\mathcal{G}(\bd{g},\bd{b})$ and $\mathcal{G}(\bd{h},\bd{b})$, we can interchange summation and integration in the right-hand side of Equation \eqref{eq:classicduality} arriving at
    \begin{equation*}
      \sum_{m,n\in\ZZ} \langle f_1,g_{m,n} \rangle \langle h_{m,n},f_2\rangle = \left\langle \sum_{m,n\in\ZZ} \langle f_1,g_{m,n} \rangle h_{m,n},f_2\right\rangle.
    \end{equation*}
    Assume Equation \eqref{eq:dualitycond} holds. Invoking the alternate Walnut representation \eqref{eq:walalt} of $\bd{S}_{\bd{g},\bd{h},\bd{b}}$, we see that for all compactly supported $f_1$,
    \begin{align*}
      \sum_{m,n\in\ZZ} \langle f_1,g_{m,n} \rangle h_{m,n} & = \bd{S}_{\bd{g},\bd{h},\bd{b}}f_1 \\
      & = \sum_{x\in E_{\bd{b}}} \omega_x \bd{T}_x f_1 \\
      & = \omega_0 \bd{T}_0 f_1 = f_1.
    \end{align*}    
    Therefore, Equation \eqref{eq:classicduality} holds for all compactly supported $f_1\in\LtR$ and by density for all $f_1\in\LtR$, proving the first inference.
    We prove the converse inference by contradiction, assuming \eqref{eq:dualitycond} to be violated, then provide a counterexample to \eqref{eq:classicduality}. Let $\mathcal{G}(\bd{g},\bd{b})$, $\mathcal{G}(\bd{h},\bd{b})$ be well-behaved in the sense of Definition \ref{def:wellbhvd}(i), i.e. if $\omega_x\neq 0$ for some $x\in\RR\setminus\{0\}$, we can choose $\delta > 0$ and $l\in\ZZ$, such that 
    $E_{\bd{b}} \cap B_{2\delta} (x) = \{x\}$ and $\omega_x|_{B_\delta (2l\delta)}\neq 0$. Let $f_1 = \chi_{B_\delta (2l\delta)-x}$ and $f_2 = \overline{\omega_x}\mid_{B_\delta (2l\delta)}$, then 
    \begin{equation*}
      0 \equiv \langle f_1,f_2 \rangle \neq \langle \bd{S}_{\bd{g},\bd{h},\bd{b}}f_1,f_2 \rangle  = \| \omega_x\mid_{B_\delta(2l\delta)} \|^2,
    \end{equation*}
    proving that $\omega_x \equiv 0$ for all $x\in\RR\setminus\{0\}$ is necessary. But then $\omega_0 \neq 1$ contradicting \eqref{eq:classicduality} can easily be seen. If instead $\mathcal{G}(\bd{g},\bd{b})$, $\mathcal{G}(\bd{h},\bd{b})$ are well-behaved in the sense of Definition \ref{def:wellbhvd}(ii), note that 
    \begin{equation}
      E_{\bd{b},n} = \{ x\in\RR, \exists~ m\in\ZZ,l\in O_n ~:~ x = mb^{-1}_l \}
    \end{equation}
    is free of accumulation points and apply the reasoning above.
  \end{proof}
  
  Note that duality of a pair of Bessel sequences in the sense of \eqref{eq:classicduality} implies the frame property for both involved Bessel sequences.
  
  \begin{Rem}
    For systems with uniform $\bd{b}$, i.e. $b_n = b$, the duality condition above reduces to the well-known conditions for Gabor frames ~\cite{rosh95,rosh97} or more generally, shift-invariant frames~\cite{ja98}. In both classical cases, the canonical dual frame inherits the structure of the original frame and thus the duality conditions are guaranteed to have a solution. This is not true for NSG systems in general. Indeed, we expect that for many choices of a NSG frame $\mathcal{G}(\bd{g},\bd{b})$, there is no system $\mathcal{G}(\bd{h},\bd{b})$ satisfying \eqref{eq:dualitycond}. 
  \end{Rem}
  
  \begin{Rem}
    The restriction to well-behaved NSG systems in Theorem \ref{thm:dualitycond} prevents us from recovering the equivalence of the duality conditions to the frame property for Wavelet systems, proven by Chui and Shi in \cite{chsh00}. However, the restriction to well-behaved systems allows for a straightforward proof, once all the ingredients are in place. Further relaxation of the conditions for well-behavedness, using the methods presented in \cite{chsh00}, is planned as future work.    
  \end{Rem}  
  
  Given a specific setup of $\bd{g}$ and $\bd{b}$, the duality conditions above may prove useful to determine the existence of a dual system that shares the modulation parameters $\bd{b}$. This is particularly interesting from an algorithmic point of view, since analysis and synthesis can be realized efficiently for NSG systems, but not for general frames. Here, we consider the setting of Corollary \ref{cor:samesupp} and show that dual pairs of NSG frames with compactly supported generators exist. We obtain the following result.

  \begin{Cor}\label{thm:shsupp}
      Let $\mathcal{G}(\bd{g},\bd{b})$ be a nonstationary Gabor Bessel sequence as in Theorem \ref{thm:mainres} with $g_n\in\LtR$, $b_n \in ]0,\frac{b_{n-1}}{b_{n-1}(d_n-c_{n-1})-1}[$ and $c_n\leq d_{n-1}$ for all $n\in\ZZ$.
      \begin{itemize}
      \item[(a)] Let $\mathcal{G}(h_n,b_n)$ a nonstationary Gabor Bessel sequence with $h_n\in\LtR$, $\supp(h_n) \subseteq [c_n,d_n]$. $\mathcal{G}(\bd{g},\bd{b})$ and $\mathcal{G}(\bd{h},\bd{b})$ are dual frames if and only if the following hold for all $n\in\ZZ$:
      \begin{itemize}
      \item For almost every $x\in [c_n,c_{n+1}[$:
		  \begin{equation}\label{eq:dualitya}
		    b_n^{-1}h_n\overline{g_n} + b_{n-1}^{-1}h_{n-1}\overline{g_{n-1}} = 1,\tag{a.i}
		  \end{equation}
      \item For almost every $x\in I^-_{n,1}$:
		  \begin{equation}\label{eq:dualityb}
		    h_n\bd{T}_{-b_n^{-1}}\overline{g_n} = 0  \tag{a.ii}
		  \end{equation}
      \item For almost every $x\in I^+_{n,1}$:
		  \begin{equation}\label{eq:dualityc}
		    h_n\bd{T}_{b_n^{-1}}\overline{g_n} = 0  \tag{a.iii}
		  \end{equation}     
      \end{itemize}
      \item [(b)] A Bessel sequence $\mathcal{G}(\bd{h},\bd{b})$ with $h_n\in\LtR$ and $\supp(h_n) \subseteq [c_n,d_n]$ for all $n\in\ZZ$ exists, such that the pair $\mathcal{G}(\bd{g},\bd{b})$, $\mathcal{G}(\bd{h},\bd{b})$ satisfy (a), if and only if there is some $A>0$ such that the following hold for all $n\in\ZZ$:
      \begin{equation}\label{eq:exist1}\tag{b.i}
	b_n^{-1/2}|g_n(t)| \geq A\ \text{ or }\ b_{n-1}^{-1/2}|g_{n-1}(t)| \geq A \text{ for a.e. } t\in[c_n,c_{n+1}[,
      \end{equation}
      \begin{equation}\label{eq:exist2}\tag{b.ii}
	b_{n-1}^{-1/2}|g_{n-1}| \geq A \text{ a.e. on } \supp(\bd{T}_{-b_n^{-1}}g_n)\cap I^-_{n,1}
      \end{equation}
      and
      \begin{equation}\label{eq:exist3}\tag{b.iii}
	b_{n+1}^{-1/2}|g_{n+1}| \geq A \text{ a.e. on } \supp(\bd{T}_{b_n^{-1}}g_n)\cap I^+_{n,1}.
      \end{equation}
      \end{itemize}
    \end{Cor}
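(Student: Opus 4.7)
The plan is to translate the duality condition of Theorem~\ref{thm:dualitycond} into the three pointwise equations (a.i)--(a.iii), then analyse when this system admits a bounded solution $\bd{h}$ with the prescribed support. First I would verify that $\mathcal{G}(\bd{g},\bd{b})$ and any candidate $\mathcal{G}(\bd{h},\bd{b})$ are well-behaved in the sense of Definition~\ref{def:wellbhvd}(ii): the hypotheses $c_n\le d_{n-1}$ and $d_{n-1}\le c_{n+1}$ (inherited from Theorem~\ref{thm:mainres}) force $O_n\subseteq\{n-1,n,n+1\}$. Theorem~\ref{thm:dualitycond} then identifies duality with $\omega_0\equiv 1$ and $\omega_x\equiv 0$ for $x\neq 0$, where $\omega_x$ is given by \eqref{eq:walaltweights}. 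The spacing hypothesis $b_n^{-1}\ge(d_n-c_n)/2+\epsilon$ forces the terms with $|k|\ge 2$ to vanish in the Walnut expansion \eqref{eq:walnut} of $\bd{S}_{\bd{g},\bd{h},\bd{b}}$, and the sharpened bound $b_n<b_{n-1}/(b_{n-1}(d_n-c_{n-1})-1)$ rearranges to $d_n-c_{n-1}<b_n^{-1}+b_{n-1}^{-1}$, guaranteeing that the supports of the contributions to distinct translates $\pm b_n^{-1}$ do not collide.

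For part~(a), I would unfold the two pieces of the duality condition separately. At $x\in[c_n,c_{n+1}[$ only $g_{n-1}$ and $g_n$ can be nonzero a.e.\ (by the overlap constraint $d_{k-1}\le c_{k+1}$), so $\omega_0\equiv 1$ reduces to (a.i). By the decoupling above, $\omega_{\pm b_n^{-1}}\equiv 0$ collapses to the single summand $b_n^{-1}h_n\bd{T}_{\mp b_n^{-1}}\overline{g_n}$, whose support is contained in $I^{\mp}_{n,1}$, yielding (a.ii) and (a.iii). Both the sufficiency and necessity directions of (a) are then immediate from this decoupling.

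For part~(b), the if-direction is a direct construction. Equations (a.ii) and (a.iii) require $h_n(t)=0$ a.e.\ on $\supp(\bd{T}_{-b_n^{-1}}g_n)\cap I^-_{n,1}$ and on $\supp(\bd{T}_{b_n^{-1}}g_n)\cap I^+_{n,1}$ respectively. On the first set, (a.i) must then be satisfied by $h_{n-1}$, so I would set $h_{n-1}(x)=b_{n-1}/\overline{g_{n-1}(x)}$; condition (b.ii) makes this pointwise solution satisfy $b_{n-1}^{-1/2}|h_{n-1}(x)|\le 1/A$. The symmetric choice handles the region constrained by (a.iii) using (b.iii). On the remainder of each $[c_n,c_{n+1}[$, I would partition into the subset where $b_n^{-1/2}|g_n|\ge A$ and its complement (on which (b.i) forces $b_{n-1}^{-1/2}|g_{n-1}|\ge A$), and split the mass of (a.i) between $h_n$ and $h_{n-1}$ accordingly, so that each satisfies $b_n^{-1/2}|h_n|\le 1/A$. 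The resulting $\bd{h}$ has $\supp(h_n)\subseteq[c_n,d_n]$, and combining $b_n^{-1/2}|h_n|\le 1/A$ with the bound $|O_n|\le 3$ yields via the painless-type estimate \eqref{eq:nspainless} the Bessel property.

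The only-if direction proceeds by contradiction: if one of the lower bounds (b.i)--(b.iii) were to fail on a set of positive measure, testing (a.i) against indicator functions of shrinking subsets of that set would force the relevant $h_n$ or $h_{n\pm 1}$ to exceed any prescribed bound in essential supremum, contradicting $h_n\in\LtR$ once combined with the already-established zeros imposed by (a.ii) and (a.iii). I expect the main technical obstacle to be coordinating the assignments on the overlapping regions $[c_n,c_{n+1}[\,\cap\, I^{\mp}_{n\pm 1,1}$, where (a.i) must be compatible with both (a.ii) and (a.iii) simultaneously; the ordering $c_{n+1}<c_n+b_n^{-1}$ enforced by $b_n^{-1}\ge c_{n+1}-c_n+\epsilon$ is the key geometric fact that keeps these constraints consistent, and careful bookkeeping of which window absorbs which piece of unit mass completes the argument.
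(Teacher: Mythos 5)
Your treatment of part (a) and of the sufficiency direction of part (b) follows essentially the same route as the paper: well-behavedness in the sense of Definition~\ref{def:wellbhvd}(ii), reduction of the duality condition \eqref{eq:dualitycond} to (a.i)--(a.iii) via the support and spacing constraints, and an explicit construction of $h_n$ of the form $b_n/\overline{g_n}$ on the region where $g_n$ is bounded below (and $h_n\equiv 0$ where (a.ii)/(a.iii) force it), with the Bessel property of $\bd{h}$ following from the uniform bound $b_n^{-1/2}|h_n|\leq 2/A$ and the finite overlap.

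There is, however, a genuine gap in your necessity argument for part (b). You conclude that if one of (b.i)--(b.iii) fails, the forced values of $h_n$ or $h_{n\pm 1}$ ``exceed any prescribed bound in essential supremum, contradicting $h_n\in\LtR$.'' This is not a contradiction: a compactly supported function can have infinite essential supremum and still lie in $\LtR$ (e.g.\ $t^{-1/4}\chi_{(0,1)}$), so unboundedness of $h_n$ does not by itself violate any hypothesis. Moreover, the failure of (b.i)--(b.iii) is a failure of a \emph{uniform} lower bound: for each $\epsilon>0$ there is \emph{some} $n$ (possibly different for each $\epsilon$) and a positive-measure set on which the relevant $|g_n|$ is smaller than $\epsilon$; each individual $h_n$ may well remain essentially bounded, only the family $\{b_n^{-1/2}h_n\}_n$ fails to be uniformly bounded. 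The correct target of the contradiction is the Bessel property of $\mathcal{G}(\bd{h},\bd{b})$, which is part of the hypothesis in (b): by Proposition~\ref{pro:diagsbound}, any joint Bessel bound $B$ forces $\sum_n b_n^{-1}|h_n|^2\leq B$ almost everywhere, and the forced values $b_n^{-1/2}|h_n|>\epsilon^{-1}$ (from (a.ii)/(a.iii) together with (a.i)) on sets of positive measure, for arbitrarily small $\epsilon$, violate this uniform bound. Replacing ``contradicting $h_n\in\LtR$'' by this appeal to Proposition~\ref{pro:diagsbound} closes the gap and recovers the paper's argument.
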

    \begin{proof}
      The systems $\mathcal{G}(\bd{g},\bd{b})$ and $\mathcal{G}(\bd{h},\bd{b})$ are well-behaved in the sense of Definition \ref{def:wellbhvd}(ii). Thus they form a pair of dual nonstationary Gabor frames if and only if Equation \eqref{eq:dualitycond} is satisfied. Invoking the support conditions on the systems, we get
      \begin{equation}\label{eq:equal1}
	b_n^{-1}h_n\overline{g_n} + b_{n-1}^{-1}h_{n-1}\overline{g_{n-1}} = 1 \text{ a.e. on } [c_n,c_{n+1}[,\tag{a.i}
      \end{equation}
      \begin{equation}\label{eq:equal0a}
	h_n\bd{T}_{-b_n^{-1}}\overline{g_n} = 0 \text{ a.e. on } I^+_{n,0} \tag{a.ii}
      \end{equation}
      and
      \begin{equation}\label{eq:equal0b}
	h_n\bd{T}_{b_n^{-1}}\overline{g_n} = 0 \text{ a.e. on } I^-_{n,0}, \tag{a.iii}
      \end{equation}
      for all $n\in\ZZ$, concluding the proof of (a).
      We first prove that \eqref{eq:exist1} to \eqref{eq:exist3} are sufficient by constructing a dual Bessel sequence $\mathcal{G}(\bd{h},\bd{b})$ satisfying the support constraints. Let for all $n\in\ZZ$, $J^{0}_n$ be the largest open subset of $[c_n,c_{n+1}[$ such that $b_n^{-1/2}|g_n|\geq A$ almost everywhere on $J^{0}_n$ and $J^{1}_n = [c_{n+1},d_n]\setminus J^{0}_{n+1}$.
      Furthermore let us denote, for all $n\in\ZZ$, $J^-_n = I^-_{n,1} \cap \supp(T_{-b_n^{-1}}g_n)$ and $J^+_n = I^+_{n-1,1} \cap \supp(T_{b_{n-1}^{-1}}g_{n-1})$. Then
      \begin{equation*}
	h_n := \begin{cases}
		b_n/\overline{g_n}, & \text{on } \left(J^{0}_n\cup J^{1}_n\right) \setminus \left( J^+_n \cup J^-_n\right),\\
		0, & \text{ else },	         
	      \end{cases}
      \end{equation*}
      is well-defined almost everywhere for all $n\in\ZZ$. With this choice, it is easy to see that the conditions \eqref{eq:equal1} to \eqref{eq:equal0b} are satisfied. Furthermore $h_n(t) < 2\sqrt{b_n}/A$ almost everywhere. Thus $h_n\in L^\infty(\RR)$ and is compactly supported, in particular $h_n\in \LtR\cap W(L^\infty,\ell^1)$. We see that $\sum_n b_n^{-1}|h_n|^2 \leq 2/A^2$. Invoke the Walnut representation and apply the proof of Proposition \ref{pro:diagsbound} to see that $\mathcal{G}(\bd{h},\bd{b})$ is a Bessel sequence.
      
      For the converse, we assume either of \eqref{eq:exist1} to \eqref{eq:exist3} to be violated. Note that $h_n$ is uniquely determined almost everywhere on $J^-_{n+1}\cup J^+_{n-1}$. If \eqref{eq:exist2} or \eqref{eq:exist3} is violated, we can for every $\epsilon > 0$ find $n\in\ZZ$, such that $b_n^{-1/2}|g_n| < \epsilon$ and consequently $b_n^{-1/2}|h_n| > \epsilon^{-1}$ almost everywhere on a subset $M \subseteq  J^-_{n+1}\cup J^+_{n-1}$ of positive measure. Therefore, $\sum_n b_n^{-1}|h_n|^2 > \epsilon^{-2}$ on a set of positive measure, contradicting the Bessel condition by Proposition \ref{pro:diagsbound}. If on the other hand \eqref{eq:exist1} is violated, then we can for every $\epsilon > 0$ find $n\in\ZZ$, such that $b_n^{-1/2}|g_n| < \epsilon$ and $b_{n-1}^{-1/2}|g_{n-1}| < \epsilon$ almost everywhere on a subset $M \subseteq [c_n,c_{n+1}[$ of positive measure. Assume 
      \ref{eq:equal1} to be satisfied, i.e.
      \begin{equation*}
	1 = |b_n^{-1}h_n\overline{g_n} + b_{n-1}^{-1}h_{n-1}\overline{g_{n-1}} | \leq \epsilon \left( b_n^{-1/2}|h_n| + b_{n-1}^{-1/2}|h_{n-1}|\right)\ \text{a.e. on } M.
      \end{equation*}
      Then, almost everywhere on $M$, either $b_n^{-1/2}|h_n| > 1/2\epsilon$ or $b_{n-1}^{-1/2}|h_{n-1}| > 1/2\epsilon$, contradicting the Bessel condition by Proposition \ref{pro:diagsbound}.
    \end{proof}
    
    \begin{Rem}
      The proof shows that, given $g_n,g_{n-1},g_{n+1}$, $h_n$ is uniquely determined on $I^-_{n,1}\cup ]d_{n-1},c_{n+1}[ \cup I^+_{n,1}$, except for a zero set. Therefore, as Christensen, Kim and Kim have observed in the regular case~\cite{chkiki12}, the equation system \eqref{eq:equal1} to \eqref{eq:equal0b} is not solvable in general, if $I^-_{n,1}\cap I^+_{n-1,1} \neq \emptyset$ and for all $n\in\ZZ$, $\supp(h_n)\subseteq \supp(g_n)$. In the classical Gabor case, because $b_n = b$ for all $n\in\ZZ$, an appropriate increase of the size of $\supp(h_n)$ does the trick. We expect that this can be generalized to NSG systems with uniform $b_n$. In the general case however, non-uniformity of $b_n$ significantly complicates matters and further work is required to determine the solvability of \eqref{eq:equal1} to \eqref{eq:equal0b} even without support constraints on the $h_n$, i.e. whether any NSG system $\mathcal{G}(\bd{h},\bd{b})$, dual to $\mathcal{G}(\bd{g},\bd{b})$, can exist.
    \end{Rem}
    
    To recover Corollary \ref{thm:shsuppgab}, replace $g_n$ by $T_{na}g$ and $b_n$ by $b$ and observe the $a$-periodicity of Gabor systems. Note that $b<\frac{b}{b(d-c+a)-1}$ is equivalent to $b<\frac{2}{d-c+a}$.
    
%%% ---------------------------------------------------------------------------------------------------------------
%%% -----------------------------------------Proof and Discussion--------------------------------------------------    
%%% ---------------------------------------------------------------------------------------------------------------
    
  \section{Proof and discussion of Theorem \ref{thm:mainres}}\label{sec:prodis}

      Before we prove Theorem \ref{thm:mainres}, we collect some preliminary results about NSG systems $\mathcal{G}(\bd{g},\bd{b})$ satisfying the conditions of the theorem. First, $d_{n-1} \leq c_{n+1}$ guarantees that, except possibly at endpoints, at most two adjacent windows $g_n$ and $g_{n+1}$ overlap. Moreover, $b_n > \frac{1}{d_n-c_n}$ combined with $b_n^{-1} > c_{n+1} - c_n$ yields $c_{n+1} < c_n + b_n^{-1} < d_n$ and analogous, $c_n < d_n - b_n^{-1} < d_{n-1}$, implying that $g_n$, $g_{n+1}$ and $g_n$, $g_{n-1}$ overlap on a nontrivial interval. If $b_n \leq \frac{1}{d_n-c_n}$, then $c_n \leq d_{n-1}$ and $c_{n+1}\leq d_n$, are still necessary conditions for completeness of $\mathcal{G}(\bd{g},\bd{b})$. Further, $b_n < \frac{2}{d_n-c_n}$ yields $[c_n,d_n] \cap ([c_n,d_n]+kb_n^{-1}) = \emptyset$ for $|k|\geq 2$. 
  
  Recall the Walnut representation (Proposition \ref{pro:walnut}) of $\bd{S}$ to see that $g_n\bd{T}_{kb_n^{-1}}\overline{g_n} \equiv 0$ for $|k| \geq 2$. The support of products of shifted weights $\bd{T}_x \omega_{n,k}$ will play a substantial role in proving Theorem \ref{thm:mainres}. Indeed, they are the motivation behind the definition of the intervals $I^\pm_{n,k}$. Since a better understanding of their relations in the setting of Theorem \ref{thm:mainres} is crucial, we precede the proof with a lemma discussing these relations. The results are used, or at least considered, several times during the course of the proof of Theorem \ref{thm:mainres}.

\begin{Lem}\label{lem:intervals}
    Under the conditions of Theorem \ref{thm:mainres}, the following hold for all $n,m\in\ZZ, k,j\in\NN$:
    \begin{itemize}
    \item[(a)] $I^-_{n,k+1} = I^-_{n,1} \cap (I^-_{n+1,k}-b_n^{-1})$, with $|I^-_{n,k+1}| < \min\{|I^-_{n,k}|,|I^-_{n+1,k}|\}$.
		Analogous: $I^+_{n,k+1} = I^+_{n,1} \cap (I^+_{n-1,k}+b_n^{-1})$, with $|I^+_{n,k+1}| < \min\{|I^+_{n,k}|,|I^+_{n-1,k}|\}$.\vspace{4pt}
    \item[(b)] $I^\pm_{n,k+1}\subsetneq I^\pm_{n,k}$.\vspace{4pt}
    \item[(c)] For $n\neq m$, $I^+_{n,k}\cap I^+_{m,j} = \emptyset$. Analogous: $I^-_{n,k}\cap I^-_{m,j} = \emptyset$.\vspace{4pt}
    \item[(d)] Whenever $I^-_{n,k}\cap I^+_{m,j} \neq \emptyset$, it follows that $m\in\{n-1,n-2\}$. Furthermore $I^-_{n,k}\cap I^+_{n-2,j}\neq \emptyset$ implies $c_n=d_{n-2}$.\vspace{4pt}
    \item[(e)] For $m\neq n$, $I^-_{n,k} + b^{-1}_n\cap I^+_{m,j} = \emptyset$ and 
		$I^-_{n,k}\cap I^+_{m,j} - b^{-1}_m = \emptyset$.\vspace{4pt}
    \item[(f)] The following are equivalent:
		\begin{enumerate}
		\item[(i)] $I^-_{n-2,0} + b^{-1}_{n-2}\cap I^-_{n,0} \neq \emptyset$,
		\item[(ii)] $I^+_{n-2,0}\cap I^+_{n,0} - b^{-1}_{n} \neq \emptyset$,
		\item[(iii)] $c_n = d_{n-2}$.
		\end{enumerate}
    \end{itemize}
\end{Lem}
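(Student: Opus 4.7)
The plan is to verify each of (a)--(f) by direct manipulation of the endpoint formulas defining $I^\pm_{n,k}$, using the two standing hypotheses $d_{n-1}\leq c_{n+1}$ and $b_n^{-1}\geq\max\{\tfrac{d_n-c_n}{2},c_{n+1}-c_n,d_n-d_{n-1}\}+\epsilon$. The $+\epsilon$ slack is what upgrades the nonstrict inequalities to the strict ones needed for proper inclusion in (b) and for genuine (rather than merely touching) disjointness in (c)--(e). Once (a) is settled, the remaining parts are all short endpoint checks.

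The workhorse is (a). The key identity $B^+_{n,k+1}=b_n^{-1}+B^+_{n+1,k}$ lets me write $I^-_{n,1}\cap(I^-_{n+1,k}-b_n^{-1})$ as $[\max\{c_n,c_{n+1}-b_n^{-1}\},\,\min\{d_n-b_n^{-1},\,d_{n+k}-B^+_{n,k+1}\}]$. The left-hand $\max$ collapses to $c_n$ because $b_n^{-1}\geq c_{n+1}-c_n$, and the right-hand $\min$ collapses to $d_{n+k}-B^+_{n,k+1}$ once one telescopes $b_{n+j}^{-1}\geq d_{n+j}-d_{n+j-1}$ into $B^+_{n+1,k}\geq d_{n+k}-d_n$; this is exactly $I^-_{n,k+1}$. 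The strict size inequalities come out by subtracting length formulas: $|I^-_{n,k+1}|-|I^-_{n,k}|=(d_{n+k}-d_{n+k-1})-b_{n+k}^{-1}<-\epsilon$ and $|I^-_{n,k+1}|-|I^-_{n+1,k}|=(c_{n+1}-c_n)-b_n^{-1}<-\epsilon$. The $I^+$ statement is identical after reflecting indices, and (b) follows from (a) by iteration.

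The remaining parts are short case analyses. For (c), reduce to $k=j=1$ via (b) and split on $m-n$: the case $m-n=1$ uses $c_{n+1}+b_{n+1}^{-1}>d_n$, which follows from the shifted hypothesis $d_n\leq c_{n+2}$ together with $b_{n+1}^{-1}\geq c_{n+2}-c_{n+1}+\epsilon$, while $m-n\geq 2$ uses $c_m\geq c_{n+2}\geq d_n$. For (d), the cases $m\geq n+1$ reduce to (c)-style arguments applied to $I^-_{n,k}\subseteq[c_n,d_n]$, the case $m=n$ is the midpoint split from $b_n^{-1}>\tfrac{d_n-c_n}{2}$ placing $I^-$ strictly below and $I^+$ strictly above $\tfrac{c_n+d_n}{2}$, the case $m\leq n-3$ uses $d_m\leq d_{n-3}\leq c_{n-1}\leq c_n$, and the case $m=n-2$ is controlled by $d_{n-2}\leq c_n$ together with $I^+_{n-2,j}\subseteq[\cdot,d_{n-2}]$ and $I^-_{n,k}\subseteq[c_n,\cdot]$, so the intersection is empty unless $c_n=d_{n-2}$. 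For (e), the decisive estimate is $c_n+b_n^{-1}>d_{n-1}$, coming from $b_n^{-1}\geq c_{n+1}-c_n+\epsilon\geq d_{n-1}-c_n+\epsilon$ via $c_{n+1}\geq d_{n-1}$, which places $I^-_{n,k}+b_n^{-1}$ strictly to the right of $I_{n-1,0}\supseteq I^+_{n-1,j}$; the symmetric estimate handles the second identity. For (f), each of (i) and (ii), after cancelling endpoints and using the telescoped consequences of hypothesis (ii), reduces to the single inequality $c_n-d_{n-2}\leq 0$; combined with the standing $d_{n-2}\leq c_n$, this forces equality, and conversely $c_n=d_{n-2}$ exhibits $d_{n-2}=c_n$ as a common point of both intersections.

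The principal obstacle is uniform bookkeeping rather than any single hard step: one must keep track of three kinds of translations ($b_n^{-1}$, $B^+_{n,k}$, $B^-_{n,k}$), of the $\pm$-symmetry swapping $c$'s with $d$'s and $n+\cdot$ with $n-\cdot$, and of which of the three $\epsilon$-slacks in hypothesis (ii) is being invoked in each comparison. The safest strategy is to prove (a) carefully once, extract the reusable telescoping lemma $B^+_{n+1,k}\geq d_{n+k}-d_n$ (and its $B^-$ analogue), and then treat the remaining items as a uniform table of endpoint comparisons driven by the $\epsilon$-slack.
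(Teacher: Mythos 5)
Your proposal is correct and follows essentially the same route as the paper: everything is reduced to endpoint arithmetic driven by the three $\epsilon$-slacks in the hypothesis, with part (a) (via the telescoping bound $B^+_{n+1,k}\geq d_{n+k}-d_n$) doing the real work and (b)--(f) becoming short endpoint comparisons. The only organizational difference is that the paper dispatches (e) and (f) immediately from (b), (c) and (d) using the identity $I^-_{n,1}+b_n^{-1}=I^+_{n,1}$, whereas you re-derive the relevant estimates directly; both are fine.
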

\begin{proof}
    (a) The conditions on $\bd{b}$ imply $c_n> c_{n+1}-b^{-1}_n$ and $d_{n+k+1} < d_{n+k}+b^{-1}_{n+k+1}$, proving the statement about the size of $I^+_{n,k+1}$. Further, by the same argument, $I^-_{n,1} \cap (I^-_{n+1,k}-b_n^{-1}) = [c_n,d_n-b_n^{-1}]\cap [c_{n+1}-b^{-1}_n,d_{n+k+1}-B^+_{n,k+1}] = [c_n,d_{n+k+1}-B^+_{n,k+1}] = I^-_{n,k+1} = I^-_{n,k} \cap (I^-_{n+1,k}-b_n^{-1})$. The proof for $I^+_{n,k+1}$ is analogue.\vspace{4pt}
	      
    (b) Follows from (a).

    (c) By (a), it is sufficient to show that $I^+_{n,1}\cap I^+_{m,1} = \emptyset$ and $I^-_{n,1}\cap I^-_{m,1} = \emptyset$ for $n\neq m$. Since the conditions on $\bd{b}$ guarantee 
	$d_n-b^{-1}_n < d_{n-1} \leq c_{n+1} < c_n+b^{-1}_n$, (c) is immediate.\vspace{4pt}

    (d) Assume $m < n-1$ or $m > n$, then it is easy to see that $I^-_{n,1}\cap I^+_{m,1} = \{c_n\}$ if $m = n-2$, $c_n = d_{n-2}$ and otherwise $I^-_{n,1}\cap I^+_{m,1} = \emptyset$. For $m = n$ we get 
	$I^-_{n,1}\cap I^+_{m,1} = \emptyset$ by the conditions on $\bd{b}$. The second part immediately follows from (b) with $I^-_{n,1} + b^{-1}_n = I^+_{n,1}$, the third part is analogue.\vspace{4pt}

    (e) and (f) follow from (b),(c), resp. (c),(d), together with $I^-_{n,1} + b^{-1}_n = I^+_{n,1}$.%\vspace{4pt}
\end{proof}

Lemma \ref{lem:intervals}(f) and the second part of (d) are concerned with the case that $c_n = d_{n-2}$ for some $n\in\ZZ$. Considering these points in the following proof would lead to some weights that are non-zero on a countable set only. Since their essential support is empty, they can be ignored when considering operators mapping $\LtR$ to $\LtR$. To avoid the treatment of these values altogether, we will, without loss of generality, assume $g_n(c_n) = 0$ for all $n\in\ZZ$. However, when considering discrete NSG systems, these ``point weights'' influence the action of the corresponding operator and must be considered, somewhat complicating the argument. For more information regarding that case, see Remark \ref{rem:discnsg}.

  With Lemma \ref{lem:intervals} in place, we can now proceed to the proof of Theorem \ref{thm:mainres}.\\
  
  \begin{proof}[Proof of Theorem \ref{thm:mainres}]
	(i): If $b_n \leq \frac{1}{d_n-c_n}$ for all $n\in\ZZ$, then the frame operator $\bd{S}$ is diagonal and there is nothing to prove, cf. \cite[Theorem 1]{badohojave11} for more information. Otherwise, we make use of both the Neumann series (\ref{eq:neumann}) representation of $\bd{S}^{-1}$ and the Walnut representation of $\bd{S}$. Since we assume $\mathcal{G}(\bd{g},\bd{b})$ to be a frame with frame bounds $A,B$, the Neumann series converges to the inverse frame operator and each of its elements defines a bounded, linear operator. The proof can roughly be structured into two parts. First, we use an induction argument to show that each element $\bd{N}^j$, $j\in\NN_0$ with $\bd{N}:=(\bd{I}-2\bd{S}/(A+B))$, of the Neumann sum $\frac{2}{A+B}\sum_{j=0}^\infty \bd{N}^j$ possesses a Walnut-like representation of the form 
	\begin{equation}\label{eq:induct}
	  \bd{N}^{j}f = \omega_{j,0}f + \sum_{n\in\ZZ}\sum_{k=1}^j \omega_{j,n,k} \bd{T}_{-B^{+}_{n,k}}f + \omega_{j,n,-k} \bd{T}_{B^{-}_{n,k}}f,
	\end{equation}
	for all $f\in\LtR$, with $\supp(\omega_{j,n,k}) \subseteq I^{-\sgn(k)}_{n,|k|}$ for all $j\in\NN_0,~k\in\NN$. Since $\bd{S}^{-1} = \sum_{j\in\NN_0} \bd{N}^{j}$. The second part discusses convergence of the sum of Walnut-like representations to the desired Walnut-like representation of $\bd{S}^{-1}$.
	
	As noted above, we assume $g_n(c_n) = 0$ for all $n\in\ZZ$. This is no restriction since $g_n \equiv g_n\chi_{]c_n,d_n]}$ in $\LtR$. Since the identity operator $\bd{I}$ has a Walnut-like representation $\bd{I}f = \omega_{0,0} f$ with $\omega_{0,0} \equiv 1$, we can invoke the conditions on $\mathcal{G}(\bd{g},\bd{b})$ to see that
	\begin{equation*}
	  \bd{N}^{0}f = f \text{ and } \bd{N}^{1}f = \omega_{1,0}f + \sum_{n\in\ZZ} \omega_{1,n,1} \bd{T}_{-B^+_{n,1}}f + \omega_{1,n,-1} \bd{T}_{B^-_{n,1}}f,
	\end{equation*}
	for all $f\in\LtR$ , with $\supp(\omega_{1,n,1}) \subseteq I^-_{n,1}$ and $\supp(\omega_{1,n,-1}) \subseteq I^+_{n,1}$. This proves \eqref{eq:induct} for $j\in\{0,1\}$.
	
	For the induction step, we show that \eqref{eq:induct} for $j$ implies \eqref{eq:induct} for $j+1$ for all $j\in\NN$. We define for $j \geq 1$
		\begin{equation*}
		  \bd{N}_D^{j}f = \omega_{j,0}f + \sum_{n\in\ZZ}\sum_{k=1}^{j-1} \omega_{j,n,k} \bd{T}_{-B^{+}_{n,k}}f + \omega_{j,n,-k} \bd{T}_{B^{-}_{n,k}}f,
		\end{equation*}
		and 
		\begin{equation*}
		  \bd{N}_R^{j}f = \sum_{n\in\ZZ} \omega_{1,n,j} \bd{T}_{-B^+_{n,j}}f + \omega_{1,n,-j} \bd{T}_{B^-_{n,j}}f.
		\end{equation*}

		This allows us to write $\bd{N}^{j}$ as the sum of $\bd{N}_D^{j}$ and $\bd{N}_R^{j}$ and consequently
		\begin{equation}\label{eq:Si2}
		  \bd{N}^{j+1} = \bd{N}^{j}\bd{N}^{1} = \bd{N}_D^{j}\bd{N}^{1} + \bd{N}_R^{j}\bd{N}^{1}.
		\end{equation}
		
		Note that the only assumptions made on the form of $\bd{N}^{j}$ is the support of the weights, allowing us to use the induction assumption to show that $\bd{N}_D^{j}\bd{N}^{1}$ has a Walnut-like representation of the form 
		\begin{equation}\label{eq:Sijdiag}
		  \bd{N}_D^{j}\bd{N}^{1}f = \eta_{j,0}f + \sum_{n\in\ZZ}\sum_{k=1}^{j} \eta_{j,n,k} \bd{T}_{-B^{+}_{n,k}}f + \eta_{j,n,-k} \bd{T}_{B^{-}_{n,k}}f
		\end{equation}
		for all $f\in\LtR$, with $\supp(\eta_{j,n,k}) \subseteq I^{-\sgn(k)}_{n,|k|}$ for all $k\in\{1,\ldots,j\}$. 
		
		On the other hand, for all $f\in\LtR$, $\bd{N}_R^{j}\bd{N}^{1}f$ can be written as
		\begin{equation*}
		  \begin{split}
		  \lefteqn{\bd{N}_R^{j}\bd{N}^{1}f =} \\ & \sum_{n\in\ZZ}\sum_{|k|=j} \omega_{j,n,k} \bd{T}_{-\sgn(k)B^{\sgn(k)}_{n,j}} \left(\omega_{1,0}f + \sum_{\tilde{n}\in\ZZ}\sum_{|l|=1} \omega_{1,\tilde{n},l} \bd{T}_{-\sgn(l)B^{\sgn(l)}_{\tilde{n},1}}f\right).
		  \end{split}
		\end{equation*}
		
		By Lemma \ref{lem:intervals} and for all $n\in\ZZ,~|k| = j$:
		\begin{align*}
		  \supp(\omega_{j,n,k}\bd{T}_{-\sgn(k)B^{\sgn(k)}_{n,j}}\omega_{1,0}) & \subseteq I^{-\sgn(k)}_{n,j},\\
		  \supp(\omega_{j,n,k}\bd{T}_{-\sgn(k)B^{\sgn(k)}_{n,j}}\omega_{1,\tilde{n},l}) & \subseteq 
		  \begin{cases} I^{-\sgn(k)}_{n,j} & ,\ l=-\sgn(k),~\tilde{n}=n+k+l,\\
				I^{-\sgn(k)}_{n,j+1} &,\ l=\sgn(k),~\tilde{n}=n+k,\\
				\emptyset & \text{, else.}
		  \end{cases}
		\end{align*}
		Order the appearing weights by the corresponding translate of $f$ and take their sum to find that $\bd{N}_R^{j}\bd{N}^{1}f$ can be written as
		\begin{equation}\label{eq:Sijside}
		  \bd{N}_R^{j}\bd{N}^{1}f = \widetilde{\omega_{j+1,0}}f + \sum_{n\in\ZZ}\sum_{k=j-1}^{j+1} \widetilde{\omega_{j+1,0,k}} \bd{T}_{-B^{+}_{n,k}}f + \widetilde{\omega_{j+1,0,-k}} \bd{T}_{B^{-}_{n,k}}f
		\end{equation}
		for all $f\in\LtR$, with $\supp(\widetilde{\omega_{j+1,n,k}}) \subseteq I^{-\sgn(k)}_{n,|k|}$ for $k\in\{\pm (j-1),\pm j,\pm (j+1)\}$. Considering Equations \eqref{eq:Sijdiag} and \eqref{eq:Sijside}, we conclude that \eqref{eq:induct} holds for $j+1$ completing the induction argument.

		We combine the results so far, arriving, for all $f\in\LtR$, at
		\begin{align}\label{eq:nowinterchange}
		  \lefteqn{\bd{S}^{-1}f = \frac{2}{A+B}\sum_{j\in\NN_0} \bd{N}^{j}f} \nonumber \\
		  & = \frac{2}{A+B}\left(\sum_{j\in\NN_0} \omega_{j,0}f + \sum_{n\in\ZZ}\sum_{k\in\ZZ\setminus\{0\}} \omega_{j,n,k}\bd{T}_{-\sgn(k)B^{\sgn(k)}_{n,|k|}}f\right).
		\end{align}
		Recall that $\|\bd{N}^{1}\|_{op} \leq C$, with $C = \frac{B-A}{B+A} < 1$, where $A$ and $B$ are the optimal frame bounds for $\mathcal{G}(\bd{g},\bd{b})$. To conclude the proof, we want to interchange the sum over $j$ with the sums over $n$ and $k$. To achieve that, we show absolute convergence in operator norm. Observe that, by $d_n \leq c_{n+2}$ we see that $\sum_{n\in\ZZ} \|f\mid_{[c_n,d_n]}\| \leq 2\|f\|$ for all $f\in\LtR$ and, checking the support of $\omega_{j,n,k}$, we have 
		\begin{equation*}
		  \omega_{j,n,k}\bd{T}_{-\sgn(k)B^{\sgn(k)}_{n,|k|}}f = \omega_{j,n,k}\bd{T}_{-\sgn(k)B^{\sgn(k)}_{n,|k|}}(f\mid_{[c_{n+k-\sgn(k)},d_{n+k-\sgn(k)}]}).
		\end{equation*}
		By Lemma \ref{lem:sepeqbnd}, we have $|\omega_{j,0}|,|\omega_{j,n,k}| \leq C^j$ and consequently
		\begin{align*}
		  \lefteqn{\left\| \sum_{j\in\NN_0} |\omega_{j,0}|f + \sum_{n\in\ZZ}\sum_{k\in\ZZ\setminus\{0\}} |\omega_{j,n,k}|\bd{T}_{-\sgn(k)B^{\sgn(k)}_{n,|k|}}f \right\|}\\
		  & \leq \left\| \sum_{j\in\NN_0} C^j|f| + \sum_{n\in\ZZ}\sum_{|k|\leq j} C^j \bd{T}_{-\sgn(k)B^{\sgn(k)}_{n,|k|}}\left|f\mid_{[c_{n+k-\sgn(k)},d_{n+k-\sgn(k)}]}\right|\right\|. \tag{\textasteriskcentered}
		\end{align*} %\\
		In the next step, we separate the first term in the sum over $j$ and for the remaining terms, interchange the sums over $j$ with that over $n$. We also reorder the sums over $j$ and $k$ by the appearing restrictions of $f$.
		\begin{align*}
		  \lefteqn{\text{(\textasteriskcentered)}}\\ & = \left\| \frac{1}{1-C}|f| + \sum_{n\in\ZZ}\sum_{k\in\ZZ\setminus\{0\}} \bd{T}_{-\sgn(k)B^{\sgn(k)}_{n,|k|}}\left|f\mid_{[c_{n+k-\sgn(k)},d_{n+k-\sgn(k)}]}\right| \sum_{j\geq |k|} C^j \right\| \\
		  & = \left\| \frac{1}{1-C}|f| + \frac{1}{1-C}\sum_{n\in\ZZ}\sum_{k\in\ZZ\setminus\{0\}} C^{|k|} \bd{T}_{-\sgn(k)B^{\sgn(k)}_{n,|k|}}\left|f\mid_{[c_{n+k-\sgn(k)},d_{n+k-\sgn(k)}]}\right| \right\| \\
		  & \leq  \frac{1}{1-C}\left\|f\right\| + \frac{2}{1-C}\sum_{n\in\ZZ}\sum_{k\in\NN} C^k \left\|f\mid_{[c_n,d_n]}\right\| \\
		  & \leq \frac{1}{1-C}\left\|f\right\| + \frac{4}{1-C}\left\|f\right\| \sum_{k\in\NN} C^k  = \frac{3C+1}{(1-C)^2}\left\|f\right\|.
		\end{align*}
		Hence, we can interchange the sums in \eqref{eq:nowinterchange} and find
		\begin{equation}
		  \bd{S}^{-1}f = \frac{2}{A+B} \left(\omega_0 f + \sum_{n\in\ZZ}\sum_{k\in\ZZ\setminus\{0\}} \omega_{n,k}\bd{T}_{-\sgn(k)B^{\sgn(k)}_{n,|k|}} f\right),
		\end{equation}
		with $\omega_0 = \sum_j \omega_{j,0}$, $\omega_{n,k} = \sum_j \omega_{j,n,k}$ and $\|\omega_0\|_\infty \leq \frac{1}{1-C}$, $\|\omega_{n,k}\|_\infty \leq \frac{C^{|k|}}{1-C}$ for all
		$n\in\ZZ$, $k\in\ZZ\setminus\{0\}$. This concludes the proof of (i).                

	(ii): To determine the support of $\widetilde{g_{m,n}}$, we use (i) and collect the weights $\omega_{\tilde{n},k}$ such that $\supp(\bd{T}_{\sgn(k)B^{\sgn(k)}_{n,|k|}}\omega_{\tilde{n},k}) \cap [c_n,d_n] \neq \emptyset$. By checking the support properties, these can be found to to be exactly the weights 
	\begin{align*}
	  \omega_{n-k+1,k},\ \omega_{n+k-1,-k} \text{, for all } n\in\ZZ,~k\in\NN \text{ and}\\
	  \omega_{n-k,k},\ \omega_{n+k,-k} \text{, for all } n\in\ZZ,~k\in\NN.
	\end{align*}
	Consequently, 
	\begin{equation}\label{eq:formcandual}
	  \begin{split}
	  \widetilde{g_{m,n}} = \omega_0 g_{m,n} + \sum_{k\in\NN} \omega_{n-k+1,k}\bd{T}_{-B^+_{n-k+1,k}}g_{m,n} + \omega_{n-k,k} \bd{T}_{-B^+_{n-k,k}}g_{m,n}\\
	  \hspace{20pt} + \omega_{n+k-1,-k} \bd{T}_{B^-_{n+k-1,k}}g_{m,n} + \omega_{n+k,-k} \bd{T}_{B^-_{n+k,k}}g_{m,n}
	  \end{split}
	\end{equation}
	holds and
	\begin{equation*}
	  \supp(\widetilde{g_{m,n}}) = I_{n,0} \bigcup_{k\in\NN} I^-_{n-k+1,k} \cup I^-_{n-k,k} \cup I^+_{n+k-1,k} \cup I^+_{n+k,k}.
	\end{equation*}
	Complete the proof of (ii) by noting that $I^{\pm}_{n,k+1} \subseteq I^{\pm}_{n,k}$ and $I^\pm_{n,1} \subseteq I_{n,0}$, where we applied Lemma \ref{lem:intervals}.
	
	(iii): We know that $g_{m,n} = \bd{M}_{mb_n}g_n$ and $e^{2\pi imbnt}$ %$\exp(2\pi imb_nt)$
	is a $b^{-1}_n$-periodic function. Furthermore, $B^-_{n+k,k+1} = B^-_{n+k,k} + b^{-1}_n$ and analogous for $B^+_{n-k,k+1}$. Apply Equation \eqref{eq:formcandual} to $g_{0,n} = g_n$ and $g_{m,n} = g_n e^{2\pi imbn\cdot}$ to confirm (iii).

	(iv): Fix $n\in\ZZ$ and $k_n^-\in\NN_0$ the smallest integer, such that $d_n-c_n-b^{-1}_n \leq k_n^- \epsilon$. Since $c_m+b_m^{-1} \geq c_{m+1}+\epsilon$, for all $m\leq n$, we see that $I^-_{n,\tilde{k}} = \emptyset$ for all 
	$\tilde{k}\geq k_n^-$. Analogous, there exists $k_n^+\in\NN_0$, such that $I^+_{n,\tilde{k}} = \emptyset$ for all $\tilde{k}\geq k_n^+$. Define $k_n:= \max\{k_n^+,k_n^-\}$, then $I^\pm_{n,\tilde{k}} = \emptyset$ for all $\tilde{k}\geq k_n$. This proves the first part. To prove the second part note that $|I^+_{n,0}| = |I^-_{n,0}| \leq C$ follows from $d_n-c_n-b_n^{-1} \leq C$ for all $n\in\ZZ$. As before, $|I^\pm_{n,k+1}|\leq |I^\pm_{n,k}|-\epsilon$, concluding the proof.
    \end{proof}
    
    \begin{Rem}
      Altogether, Theorem \ref{thm:mainres} tells us that, in the described case, the canonical dual frame of $\mathcal{G}(\bd{g},\bd{b})$ is not too different in structure from $\mathcal{G}(\bd{g},\bd{b})$ itself. 
      From Theorem \ref{thm:mainres}(iii) in particular, we see that a choice of constant $b_n$ leads, as expected, to a canonical dual that is also a nonstationary Gabor frame. In cases where $b_n$ varies in a systematic
      way, e.g. as powers of $2$, $\{\bd{S}^{-1}g_{n,m}\}_{n,m\in\ZZ}$ can be interpreted as a nonstationary Gabor frame such that the functions $\bd{S}^{-1}g_{n,m}$ for fixed $n\in\ZZ$ are constructed from few prototypes by regular modulation with some step $\tilde{b}_n$ only dependent on $n$.
    \end{Rem}
    
    \begin{Rem}\label{rem:discnsg}[Discrete NSG systems]
      Above, we have disregarded systems with $g_n(d_n)g_{n+2}(d_n) \neq 0$, because isolated points are null sets in $\LtR$. This is not anymore true in $\ell^2(\ZZ)$ or $\CC^L$ with the usual point measure. Hence, these cases are worth some consideration. By considerations similar to those in the proof above, additional weights $\rho^\pm_{n,k,l}$ may appear for $n\in\ZZ,k\in\NN,l\in\NN_0$. All of them are supported on a single point, more explicitly $\supp(\rho^+_{n,k,l}) = d_n-B^+_{n-k+1,k}$ with corresponding translation operator $\bd{T}_{-B^+_{n-k+1,k}-B^+_{n+2,l+1}}$ and $\supp(\rho^-_{n,k,l}) = d_n+B^-_{n+k+1,k}$ with corresponding translation operator $\bd{T}_{B^-_{n+k+1,k}+B^-_{n,l+1}}$. 
      
      However, also in the discrete setting, ``smooth'' window functions are preferred for their better time-frequency concentration. Therefore, assuming $g_n$ to be zero at the endpoints of its support is a weak restriction.
      
      With this caveat and the usual considerations in mind, the proof of Theorem \ref{thm:mainres} above can be directly applied to NSG systems in $\ell^2(\ZZ)$.
    \end{Rem}

    \begin{Rem}[Frames for $\CC^L$]\label{rem:CL}
      For finite, discrete nonstationary Gabor transforms, Theorem \ref{thm:mainres} applies with essentially the obvious adjustments. Albeit, the circular nature of this setting introduces potential complications. 
      
      To ensure that (iii) still holds, we must guarantee that the intervals $I^{(0)}_n$, $I^+_{n-k,k}$, $I^-_{n+l,l}$ are disjoint. Assume the number of windows $g_n$ to be $N$. Then if the nonstationary Gabor system in question satisfies $g_n(d_n)g_{n+2}(d_n) = 0$ for all $n\in\{0,\ldots,N-1\}$, it is sufficient that $\sum_{n=0}^{N-1} b_n^{-1} \geq L + \max_n |I^+_{n,1}|$.

      Fast computation of the inverse frame operator can be implemented e.g. via a structured Gaussian elimination algorithm.
    \end{Rem}  
  
  \section{Conclusion}\label{sec:conclu}
    We presented several results on the structure of nonstationary Gabor systems with low redundancy in time and moderate redundancy in frequency, demonstrating that such systems, if invertible, possess an inverse frame operator with a distinct structure not too different from that of the original frame operator. While the canonical dual frame will be of nonstationary Gabor type only if the modulation parameters $b_n$ are chosen uniformly, we have given a simple condition on the existence of a dual nonstationary Gabor frame satisfying the exact same support conditions. Furthermore, such a frame can be constructed by solving a simple set of equations. Reduction of our results to the case of classical Gabor systems shows that the canonical dual frame satisfies a special support condition, for which Christensen, Kim and Kim have recently shown the existence of a dual frame satisfying it. Under stronger restrictions on the redundancy of the Gabor system, we showed that this support condition can be improved to 
coincide with the original support.
    
    Further, we have generalized the duality conditions for Gabor systems to the setting of well-behaved NSG systems, providing a tool for investigating the existence of dual pairs of nonstationary Gabor systems. 
    
    Future work includes the investigation of the inverse frame operator for more general NSG systems, allowing for higher overlap and/or coarser frequency sampling, although numerical experiments have shown that low redundancy systems with high overlap possess a highly non-sparse inverse frame operator. Moreover, harnessing the results in this manuscript to provide fast implementations for the inversion of certain discrete nonstationary Gabor frames, extending the flexibility of such systems in applications is planned.

  \section{Thanks and Acknowledgement}
    The author would like to thank Peter Balazs, Monika D\"{o}rfler, Ewa Matusiak, Peter L. S\o{}ndergaard and Christoph Wiesmeyr for proofreading and fruitful discussion on the topics of the manuscript.

    This research was supported by the EU FET Open grant UNLocX (255931) and the Austrian Science Fund (FWF) START-project FLAME (``Frames and Linear Operators for Acoustical Modeling and Parameter Estimation''; Y 551-N13).

\end{document}